\newcommand{\E}{\mathbb E}
\newcommand{\R}{\mathbb R}
\newcommand{\F}{\mathbb F}
\newcommand{\Prob}{\mathbb P}
\newcommand{\bbN}{\mathbb N}
\newcommand{\bbD}{\mathbb D}
\newcommand{\supp}{\mbox{supp}}
\newcommand{\bbT}{\mathbb T}
\newcommand{\cbbT}{\overline{\mathbb T}}
\newcommand{\sA}{\mathcal A}
\newcommand{\sF}{\mathcal F}
\newcommand{\sL}{\mathcal L}
\newcommand{\sM}{\mathcal M}
\newcommand{\sN}{\mathcal N}
\newcommand{\sP}{\mathcal P}
\newcommand{\sQ}{\mathcal Q}
\newcommand{\sV}{\mathcal V}
\newtheorem{thm}{Theorem}[section]
\newtheorem{lem}{Lemma}[section]
\newtheorem{prop}{Proposition}[section]
\newtheorem{rem}{Remark}[section]
\newtheorem{cor}{Corollary}[section]
\newtheorem{eg}{Example}[section]
\newtheorem{defn}{Definition}[section]
\newtheorem{ass}{Assumption}[section]
\begin{document}
\setcounter{footnote}{1}
\title{Mimicking martingales}

\date{\today}

\author{David Hobson \\
University of Warwick, Coventry CV4 7AL, UK.}

\maketitle

\begin{abstract}
Given the univariate marginals of a real-valued, continuous-time martingale, (respectively, a family of measures parameterised by $t \in [0,T]$
which is increasing in convex order, or a double continuum of call prices) we construct a family of pure-jump martingales which mimic that martingale
(respectively, are consistent with the family of measures, or call prices). As an example, we construct a fake Brownian motion.
Then, under a further `dispersion' assumption, we construct the martingale which (within the family of martingales which are consistent with a given set of measures) has the
smallest expected total variation. We also give a path-wise inequality, which in the mathematical finance context yields a model-independent sub-hedge for
an exotic security with payoff equal to the total variation along a realisation of the price process.
 \end{abstract}

\section{The problem}
\label{sec-intro}

In this article we are concerned with the following problem: given a martingale
$M = (M_t)_{0 \leq t \leq T}$ on a stochastic basis $(\Omega^M, \sF^M, \F^M, \Prob^M)$,
construct a {\em fake version} of $M$ (or equivalently a process which {\em mimics} $M$), i.e. construct
(potentially on a new filtered probability space $(\Omega, \sF, \F, \Prob)$) a stochastic process
$X = (X_t)_{0 \leq t \leq T}$ such that $X$ is a $\F$-martingale and the univariate marginals of
$X$ are the same as those of $M$, but such that the joint marginals are different.

The problem can
be reformulated in two further ways. First, instead of beginning with a martingale $M$, we can begin
with a family of laws $(\mu_t)_{0 \leq t \leq T}$ which are increasing in convex order.  Then the aim
is to construct a filtered probability space $(\Omega, \sF, \F, \Prob)$ and a $\F$-martingale
$X= (X_t)_{0 \leq t \leq T}$ on that space such that $\Prob(X_t \leq x) = \mu_t ((-\infty,x])$ for all $t$ and all $x$.
Then we say that $X$ is consistent with the measures $(\mu_t)_{0 \leq t \leq T}$. Second, but closely related,
instead of beginning with a process or a set of measures we can work in the setting of mathematical
finance and start with a double continuum of European call prices $\{ C(t,k) ; 0 \leq t \leq T, 0 \leq k < \infty \}$
which satisfy no-arbitrage conditions. (We assume that we are working with discounted prices, and then the no-arbitrage conditions
are that: for each $t$, $C(t,k)$ is a decreasing convex function with $C'(t, 0+) \geq 1$ and $\lim_{k \uparrow \infty}C(t,k)=0$;
$C(t, 0)$ is a positive constant, independent of $t$; and
$C(t,k)$ is non-decreasing in $t$, for all $k$.) Then the aim is to find a model which is consistent with the given call
prices, i.e. a filtered probability space and a martingale $X$ on that space such that
$\E[(X_t - k)^+] = C(t,k)$ for all $t$ and all $k$. Then we say $X$ is consistent with the family of call prices $\{C(t,k)\}$.

For a given martingale $(M_t)_{0 \leq t \leq T}$ (or family of laws $(\mu_t)_{0 \leq t \leq T}$
or call prices $(C(t,k))_{0 \leq t \leq T, 0 \leq k < \infty}$) there will in general be many martingales $X$ which mimic
$M$ (are consistent with the family of measures or call prices). (One rare exception is if $\mu_t$ is the uniform measure on
the two point set $\{-t,t\}$ for each $t$.) Then an extension of the mimicking martingale problem is to find an extremal
martingale with the correct marginals. Thus, given a path-dependent functional, for example the total vaiation of the path,
the aim is to find the martingale which minimises (the expected value of) the path-dependent functional within the class of martingales
which have the given marginals. A strong motivation for considering problems of this type is from mathematical finance where there
is a literature on robust (model-independent) pricing and hedging.
Typically in the modelling of financial security prices we do not know the
underlying dynamics, but we can observe put and call prices, and from these we can
infer the market-implied law of the security price at maturity times $t$.
If a double continuum of European vanilla option prices is available, then
we may assume that the family of laws $(\mu_t)_{0 \leq t \leq T}$ is given and
the problem is to construct processes which are consistent with this
family. Further, since the mathematical theory of option pricing
postulates that security prices are given as discounted expectations under
an equivalent martingale measure, a natural additional requirement is that
the price process is a martingale. Then the lowest expected value of the path-dependent functional
(taken over models consistent with call prices) is
identified with the lowest arbitrage-free price for an exotic derivative. %The problem of finding
%the lowest price in the class of consistent models is strongly related to a dual problem of finding the most expensive sub-replicating strategy.

The literature on these problems can be traced back to Gy\"ongy~\cite{Gyongy:86}, Dupire~\cite{Dupire:97}, Madan and Yor~\cite{MadanYor:02}, the
literature on fake Brownian motions (Hamza and Klebaner~\cite{HamzaKlebaner:07}, Albin~\cite{Albin:08}, Oleszkiewicz~\cite{Oleszkiewicz:08}), {\sc Pcoc}s
(Hirsch et al~\cite{HirschProfetaRoynetteYor:11}) and most recently and most relevantly for this paper,
Henri-Labord\`{e}re et al~\cite{Henri-LabordereTanTouzi:15} and K\"{a}llblad et al~\cite{KallbladTanTouzi:15}. In addition there are strong
connections with the literatures on the Skorokhod embedding problem ({\sc Sep}), see Ob{\l}{\'o}j~\cite{Obloj:04} and Hobson~\cite{Hobson:11} for surveys,
martingale optimal transport ({\sc Mot}, Beiglb\"{o}ck et al~\cite{BeiglbockHenriLaborderePenkner:13}, Beiglb\"ock and Juillet~\cite{BeiglbockJuillet:14}),
and robust hedging of options (Hobson~\cite{Hobson:98, Hobson:11}).

Given an It\^{o} process, Gy\"ongy~\cite{Gyongy:86} showed how to construct a diffusion $X$ with the same univariate marginals. Dupire~\cite{Dupire:97}
took a finance viewpoint and argued that there is a unique martingale diffusion process which is consistent with any sufficiently regular set of discounted
call prices.  Madan and Yor~\cite{MadanYor:02} looked for Markov martingales which are consistent with a given set of univariate
marginals and give several constructions inlcuding one based on the Az\'{e}ma-Yor solution of the {\sc Sep}. In the case of Gaussian laws, this construction
yields a discontinuous fake-Brownian motion. Unaware of this result, Hamza and Klebaner~\cite{HamzaKlebaner:07} introduced the fake Brownian motion problem,
and gave a further example of a discontinuous martingale with $N(0,t)$ marginals. Their paper inspired Albin~\cite{Albin:08} and Oleszkiewicz~\cite{Oleszkiewicz:08}
to produce very elegant constructions of continuous non-Markovian martingales with $N(0,t)$ marginals. More generally, the problem of constructing processes
consistent with a given set of marginals (which are increasing in convex order) was called the {\sc Pcoc} problem by Hirsch et
al~\cite{HirschProfetaRoynetteYor:11}. They give several general methods and examples. However there is much scope for further solutions especially
those with optimality properties.

From the mathematical finance perspective the study of extremal consistent martingales can be traced back to
Hobson~\cite{Hobson:98}, see also the survey article~\cite{Hobson:11}. In this literature a path-dependent functional is interpreted as the payoff
of an exotic option and then the lowest price which is consistent with a given set of call option prices and no-arbitrage is the minimum over
all consistent martingale models of the expected value of the path-dependent functional. One of the
significant achievements of this literature is to understand the relationship between this primal pricing problem and a dual hedging problem. In the dual
problem the aim is to find strategies which satisfy
\[ \mbox{Exotic Option Payoff} \geq \mbox{Payoff from Vanilla Options} + \mbox{Payoff from Self-financing Strategy.} \]
Then we can represent the lower bound on the price of the exotic option in terms of the most expensive sub-hedging vanilla option portfolio (the self-financing
part being costless under any martingale model). The fact that European call options (which have prices which depend on univariate marginals) are
often liquidly traded,
whereas derivatives with payoffs depending on two-or-more maturities (e.g. forward starting straddles or barrier options) are relatively less liquidly traded,
provides a strong motivation for
considering mimicking problems where the aim is to match univariate marginals, but not joint marginals.

One approach to solving the mimicking problem is to exploit links with the {\sc Sep}. Given a stochastic process $Y$ and a target law $\mu$, the {\sc Sep}
is to construct a stopping time $\tau$ such that $Y_\tau \sim \mu$. The classic setting is when $Y$ (which we now write as $W$) is a Brownian motion
null at 0 and $\mu$ is a
centred probability measure. Then, using the correspondence between right-continuous martingales and minimal time-changes of Brownian motion, given $\tau$
such that $W_\tau \sim \mu$ we can define $X = (X_{t})_{0 \leq t \leq T} = (W_{A_t})_{0 \leq t \leq T}$ such that $X$ has law $\mu$ at a fixed time:
simply take $A_t = \frac{t}{T-t} \wedge \tau$. Then we have a martingale with marginal $\mu_T$ at time $T$. Moreover, given solutions of the {\sc Sep}
for non-trivial initial laws (the forward-starting {\sc Sep}) we can use concatenation to construct processes with prescribed univariate marginals at a finite set of times
$0 < t_1 < t_2 < \ldots t_n \leq T$.

Following work of Beiglb\"{o}ck et al~\cite{BeiglbockHenriLaborderePenkner:13} the problem of constructing martingales with given laws at two times
(whether via a connection with the {\sc Sep} or otherwise) is sometimes called the
martingale optimal transport problem ({\sc Mot}). Given a pair $(\mu,\nu)$ of measures which are increasing in convex order, the {\sc Mot} is to construct a
coupling or joint law with marginals $(\mu,\nu)$ which respects the martingale property. {\sc Mot} theory has proved extremely useful both in constructing
extremal consistent models, and in proving martingale inequalities (see e.g. Acciaio et al~\cite{AcciaioBeiglbockPenknerSchachermayerTemme:14}). Again, from
{\sc Mot} based on pairs of measures, there is a natural extension to finite families of measures.

The problem of constructing a fake version of a continuous-time martingale can be thought of as the
infinite-marginal generalisation of the multi-marginal {\sc Sep} or
{\sc Mot}. Indeed, the existence of solutions of these problems underpins a natural approach to the mimicking problem: discretize time
using a sequence of partitions to reduce the
continuous-time problem based on $(\mu_t)_{0 \leq t \leq T}$ to a sequence of discrete-time problems
with marginals $(\tilde{\mu}^{(n)}_{k})_{0 \leq k \leq K(n)}$
(where $\tilde{\mu}^{(n)}_k = \mu_{kT/K(n)}$);
construct a sequence of discrete-time martingales ${\tilde{X}}^{(n)}$ with marginals $(\tilde{\mu}^{(n)}_{k})_{0 \leq k \leq K(n)}$;
re-interpret these discrete-time
processes as skeletons of piecewise constant continuous-time martingales $X^{(n)}$; and finally show that the processes
$X^{(n)}$ converge to a consistent martingale. It is this last step
which is potentially difficult, especially if we want to show that some optimality property is preserved in the limit. This is the approach taken in both
Henri-Labord\`{e}re et al~\cite{Henri-LabordereTanTouzi:15} and K\"{a}llblad et al~\cite{KallbladTanTouzi:15} based on solutions of the {\sc Sep/Mot}
due to Beiglb\"ock and Juillet~\cite{BeiglbockJuillet:14} and Henri-Labord\`{e}re et al~\cite{Henri-LabordereOblojSpoidaTouzi:15} respectively.

The approach in this article is different. Rather than using an approximation technique we aim to write down the mimicking martingale directly. Conceptually,
this is more direct than the approximation procedure used in \cite{Henri-LabordereTanTouzi:15} and
\cite{KallbladTanTouzi:15}. There is one key idea which allows us to do this: our solutions are based on the principle that whenever possible we leave
the location of mass unchanged. The processes we construct have finitely many jumps in every interval $[\epsilon,T]$ and are constant between
jumps.

Our main contributions are as follows. Firstly, under fairly weak assumptions we give a family of martingales whose univariate marginals match a given set
of measures. Secondly, under an extra simplifying assumption - the extension of the `dispersion assumption' in \cite{HobsonKlimmek:15} from pairs of marginals to
a continuum of marginals - we identify the consistent martingale with smallest expected total variation.
(Note that if our goal is to mimic a martingale
diffusion $M$ then necessarily $M$ has infinite variation, almost surely.) Thirdly, we give a pathwise inequality which can be interpreted as
a model independent sub-hedge in the exotic option pricing setting.

In particular, we show that there is a
martingale with Gausian univariate marginals (mean zero, variance $t$ at time $t$) which unlike Brownian motion has finite expected total variation. More generally,
suppose $(\mu_t)_{0 \leq t \leq T}$ is a family of probability measures which is increasing in convex order, and suppose that $\mu_t$ has a continuous density
$\rho(t,x)$ and is such that $\rho$ is differentiable in $t$ (and such that $\rho$ and $\dot{\rho}$ have certain integrability properties).
Now add the hypothesis
that $\dot{\rho}(t,y) \leq K(t) \rho(t,y)$ for some decreasing function $K(t)$: in the Brownian case we can take $K(t)=1/2t$.
In this setting we show how to construct a consistent martingale from a Poisson point process and a family of martingale couplings.
Different martingale couplings will lead to different
mimicking processes, so this construction potentially yields many consistent martingales.
Further, since the dispersion assumption is satisfied in the Brownian case, we can construct the fake Brownian motion of minimal expected total variation.

\section{Construction of a family of mimicking process}
\label{sec:candidate}

\subsection{Set-up, assumptions and examples}
\label{ssec:setup}
Fix $\epsilon, T$ with $0 \leq \epsilon<T<\infty$ and let $\bbT$ be the open interval $\bbT = (\epsilon , T)$ and $\cbbT = [\epsilon, T]$. We suppose we are given
$(\mu_t)_{t \in \cbbT}$; our aim is to construct a
martingale $X = (X_t)_{t \in \cbbT}$ with marginals $(\mu_t)_{t \in \cbbT}$.

We begin by stating our assumptions on the family of measures $(\mu_t)_{t \in \cbbT}$. % where $\bbT$ is an interval contained in $[0,\infty)$.
The first assumption is necessary for the family
$(\mu_t)_{t \in \cbbT}$ to be increasing in convex order, and hence by Kellerer's Theorem~\cite{Kellerer:72} for there to exist a
martingale with this family of marginal distributions.
A key quantity is $U(t,x) := \int |y-x| \mu_t(dy)$ which is (minus) the
potential of
$\mu_t$. Then $U$ is non-decreasing in $t$ for each fixed $x$ if and only if the
family $(\mu_t)_{t \in \cbbT}$ is increasing in convex order.

%We also have$\lim_{x \rightarrow \pm \infty} U(t,x) - |x| = 0$.
\begin{ass}
\label{ass:1}
\begin{enumerate}
\item[(a)] For all $t \in \cbbT$, $\int \mu_t (dy) = 1$; $\int |y| \mu_t(dy) < \infty$; $\int y \mu_t(dy) = \overline{\mu}$.
\item[(b)] $U(t,x) := \int |y-x| \mu_t(dy)$ is non-decreasing in $t \in \cbbT$ for each $x$.
\end{enumerate}
\end{ass}

%By a simple shift of the probability measures, without loss of generality we may assume that $\overline{\mu}=0$ and that each measure $\mu_t$ is centred.

Now we impose some further regularity conditions on the potential.
\begin{ass}
\label{ass:2}
\begin{enumerate}
\item[(a)] The potential $U(t,x)$ is continuous in $t$ for $t \in \cbbT$.

\item[(b)] $Q_t(x) = Q(t,x) := \frac{1}{2}\dot{U}(t,x)$
exists for each $t \in \bbT$ and is continuous in $x$ and $t$.
Also, $Q_t(x)$ satisfies $\lim_{x \rightarrow \pm \infty} Q_t(x)=0$ and $\lim_{x \rightarrow \pm \infty} Q_t(x) - xQ'_t(x)=0$.
Since $U$ is non-decreasing in $t$ we have $Q_t \geq 0$.

\item[(c)] For each $t \in \bbT$, $Q_t(\cdot)$ is the difference of two positive, decreasing convex functions. In particular,
$Q_t(x) = \Lambda_t(x) - \Gamma_t(x)$ where
$\lim_{x \uparrow \infty} \Lambda_t(x)=0$.
Further, $\Lambda_t$ is such that $m(t):=\lim_{x \downarrow -\infty} \Lambda_t(x)/|x|$ and
$n(t) :=  \lim_{x \downarrow -\infty} \Lambda_t(x) - x\Lambda_t'(x)$
%(which exist in $[-\infty,\infty]$ by convexity)
are finite. Finally, $\int_{t \in \bbT} m(t) dt < \infty$.
\end{enumerate}

Since $\Lambda_t$ and $\Gamma_t$ are convex we can write $\lambda_t = \Lambda_t''$ and $\gamma_t= \Gamma''_t$, whence $\gamma_t$ and $\lambda_t$ are
measures (which we argue below have the same finite total mass $m(t)$). For uniqueness we insist that $\lambda_t$
and $\gamma_t$ are orthogonal. This is equivalent to choosing the decomposition of $Q_t = \Lambda_t - \Gamma_t$ such that $\Lambda_t$ and $\Gamma_t$ are
as small as possible.
Then $q_t = Q_t''$ may be interpreted as a signed measure and $q_t(dx) = \lambda_t(dx) - \gamma_t(dx)$.

\begin{enumerate}
\item[(d)] For each $t \in \bbT$, $\gamma_t$ is absoluteluy continuous with respect to $\mu_t$
and
the Radon-Nykodym derivative
$R_t(x) = \gamma_t(dx)/\mu_t(dx)$ is measurable in $x$ and $t$ and is bounded in the sense that $R_t(x) \leq K(t)$ for some function $K(t)$
which is bounded on $\bbT$ by $\overline{K}$.

\item[(e)] For any measurable set $B$ and for all $t \in \cbbT$, $\mu_t(B) = \mu_{\epsilon}(B) + \int_{\epsilon}^t q_s(B) ds$.
% is continuously differentiable in $t$ for $t \in \bbT$ with derivative $q_t(B)$.

\end{enumerate}
\end{ass}

%{\bf Question: do we need to assume that $Q \rightarrow 0$ or can we deduce this from $C$ and the existence of $Q$.}

%{\bf Are we better assuming $q_t$ is a finite measure?}

%{\em I think:
Fix $t \in \bbT$. By assumption $\lim_{x \rightarrow \pm\infty} Q_t(x) = 0$ and since $Q_t$ is the difference of convex functions and since
$\lim_{x \uparrow \infty} \Lambda_t(x)=0$ we have $\lim_{x \uparrow \infty} \Gamma_t(x)=0$ also.
The fact that $\lim_{x \uparrow \infty} \Lambda_t(x) - x \Lambda_t'(x)=0$ follows from the convexity of $\Lambda_t$ as does the existence of
$\lim_{x \downarrow -\infty} \Lambda_t(x) - x \Lambda_t'(x)$ in $[0,\infty]$. Hence the content of the assumption on $n(t)$ is that $n(t)$ is finite.

Convexity of $\Lambda_t$ implies that $\lim_{x \downarrow -\infty} \Lambda_t(x)/|x|$ exists in $[0,\infty]$, so again, the content of the assumption on $m(t)$
is that this limit is finite.
In addition $\lim_{x \downarrow -\infty} Q_t(x)=0$ by hypothesis and we conclude that $\lim_{x \downarrow -\infty} \Gamma_t(x)/|x| = m(t)$ also.
In particular, $\gamma_t$ and $\lambda_t$ have the same total mass, and $q_t$ has zero total mass.

Finally, since $\lim_{x \downarrow -\infty} Q_t(x) - x Q_t'(x)=0$ and $\lim_{x \downarrow -\infty} \Lambda_t(x) - x \Lambda_t'(x)=n(t)$,
we conclude that $\lim_{x \downarrow -\infty} \Gamma_t(x) - x \Gamma_t'(x)=n(t)$ also. Hence $\lambda_t$ and $\gamma_t$ have the same mean. The fact that
$Q_t \geq 0$ implies that the pair $(\gamma_t,\lambda_t)$ is increasing in convex order.

\begin{rem}{\rm
Our ultimate goal is to construct a process on $[0,T]$. However, it is clear from the examples that below that
often there is no universal bound $\overline{K}$ for $K(t)$ on intervals of the form
$\bbT = (0,T)$, and the processes
we build will have infinitely many jumps on such intervals. To circumvent any problems related to the presence of infinitely many jumps
near $t=0$ we first construct processes with time parameter
set $\cbbT = [\epsilon, T]$, and then extend to larger intervals such as $[0,T]$ by concatenation. See Section~\ref{sec:extension}.
}\end{rem}

\begin{rem}{\rm
The requirement that $Q$ exists for all $t \in \bbT$ can be relaxed to an assumption that $Q$ exists and has the properties of Parts (b), (c) and (d) of
Assumption~\ref{ass:2} on a subset $\bbT_0 \subseteq \cbbT$ of full measure. }
\end{rem}

For each $t \in \cbbT$, let $I^{\mu}_t$ be the smallest interval such that $\mu_t$ has support in $I^{\mu}_t$.
Let the endpoints of $I^\mu_t$ be given as $\{ {\ell}_{\mu}(t), r_{\mu}(t) \}$. Let $I = \cup_{t \in \cbbT} I^\mu_t$.
Similarly, let $I^\lambda_t$ with endpoints $\{ \ell_\lambda(t), r_\lambda(t) \}$ denote the smallest interval such that
$\lambda_t$ has support in $I^\lambda_t$, with similar conventions
for other measures. Then, since the pair $(\gamma_t,\lambda_t)$ is increasing in convex order we must have
\( \ell_{\mu}(t) \leq \ell_{\lambda}(t) \leq \ell_{\gamma}(t) \leq r_{\gamma}(t) \leq r_{\lambda}(t) \leq r_{\mu}(t)$.

\begin{defn}
\label{def:regular}
By the regular case we mean that $U(t,x) \in C^{1,2}$ (so that the density $\rho(t,x)$ of $\mu_t$ and $\dot{\rho}(t,x)$ exist and are continuous),
that for each $t \in \bbT$, $\int |\dot{\rho}(t,x)| dx < \infty$ and $\int |x| |\dot{\rho}(t,x)|dx < \infty$ and also
$\int_{\bbT} dt \int |\dot{\rho}(t,x)|dx < \infty$.
\end{defn}

\begin{rem}{\rm
Suppose we are in the regular case. Then
$\Gamma_t(x) = \int_x^\infty (y-x) \dot{\rho}(t,x)^- dx$ and $\Lambda_t(x) = \int_x^\infty (y-x) \dot{\rho}(t,x)^+ dx$. Since
$\int |\dot{\rho}(t,x)| dx < \infty$ and $\int |x| |\dot{\rho}(t,x)|dx < \infty$ it follows that $m(t)$ and $n(t)$ are well
defined and finite. Then
\[ m(t) = \lim_{x \downarrow - \infty} \frac{\Gamma_t(x)}{|x|} = \lim_{x \downarrow - \infty} \left\{ \int_x^\infty
\frac{y \dot{\rho}(t,y)^-}{|x|} dy + \int_x^\infty \dot{\rho}(t,y)^- dy \right\} = \int_{-\infty}^\infty \dot{\rho}(t,y)^- dy \]
and $n(t) = \int_{-\infty}^\infty y \dot{\rho}(t,y)^- dy$.

%Using Fubini and the condition that $\int_x |\dot{\rho}(t,x)| dx$ is integrable on $\bbT$ we have
%\[ 1 = \int_{-\infty}^{\infty} \rho(t,x)dx = \int_{-\infty}^{\infty}dx \left[ \rho(t_0,x) + \int_{t_0}^t
%\dot{\rho}(s,x) ds \right] = 1 +  \int_{t_0}^t ds \int_{-\infty}^\infty \dot{\rho}(s,x)dx, \]
%and differentiating in $t$,
%\( 0 = \int_{-\infty}^\infty \dot{\rho}(s,x)dx ,\)
%so that $m(t) = \int \dot{\rho}(t,x)^- dx = \int \dot{\rho}(t,x)^+ dx$.

It follows from the condition that $\int_x |\dot{\rho}(t,x)| dx$ is integrable on $\bbT$ and an application of Fubini's theorem that
\( 0 = \int_{-\infty}^\infty \dot{\rho}(s,x)dx ,\) and hence that $m(t) = \int \dot{\rho}(t,x)^- dx = \int \dot{\rho}(t,x)^+ dx$.
Similarly we find that
%Similarly
%\[ 0 = \int_{-\infty}^{\infty} x \rho(t,x)dx = \int_{-\infty}^{\infty}dx \; x \left[\rho(t_0,x) + \int_{t_0}^t
%\dot{\rho}(s,x) ds \right]=  \int_{t_0}^t ds \int_{-\infty}^\infty x  \dot{\rho}(s,x)dx . \]
%We conclude that
$n(t) = \int x \dot{\rho}(t,x)^- dx = \int x \dot{\rho}(t,x)^+ dx$.
If $H$ is bounded and measurable, we have
\[ \int_{t_0}^t \! du \! \int H(x) q_u(dx) = \int \! H(x) \int_{t_0}^t \! \dot{\rho}(u,x)dx
= \int \! H(x) [\rho(t,x) - \rho(t_0,x)] dx = \int \! H(x) \mu_t(dx) - \int \! H(x) \mu_{t_0}(dx) \]
and taking $H(x) = I_{ \{ x \in B \} }$ we find
\( \frac{d}{dt} \mu_t(B) = q_t(B). \)
Finally, in the regular case, $R_t(x)= \dot{\rho}(t,x)^-/\rho(t,x)$, and Assumption {\rm \ref{ass:2}(d)} becomes
$\dot{\rho}(t,x)^- \leq K(t) \rho(t,x)$ uniformly in $x \in I$, for some decreasing function ${K}$.
In particular, in the regular case verifying that the conditions of Assumption~\ref{ass:2} are satisfied reduces to verifying that
 $\dot{\rho}(t,x)^- \leq \rho(t,x) K(t)$.

}\end{rem}

We give illustrative five examples. The first two examples are natural examples where the situation is regular in the sense of Definition~\ref{def:regular}.
The third example is also completely natural, but is such that although $\mu_t$ and $\gamma_t$ have nice densities the measure $\lambda_t$ is atomic.
The fourth example has a reverse structure in which $\gamma_t$ is purely atomic (and $\mu_t$ has an atom), but $\lambda_t$ has a density.
The final example is beyond the
scope of our methods: the issue is not that $\mu_t$ has atoms, but rather that the atoms of $\mu_t$ move over time.

Let $\delta_{y}$ denote the point mass at $y$ so that $\delta_y(B) = I_{ \{ y \in B \} }$.
Define $\Phi(x) = \int_{-\infty}^x \frac{e^{-y^2/2}}{\sqrt{2 \pi}} dy$.

\begin{eg}[Brownian motion]
\label{eg:bm} {\rm Let $\mu_t$ be
the law of a centred Gaussian variable with variance $t$. Then the family $(\mu_t)_{t \geq 0}$ is the family of marginals of Brownian motion. We have
\begin{eqnarray*}
U(t,x) & = & 2 \sqrt{t} \left\{ \frac{e^{-x^2/2t}}{\sqrt{2 \pi}} + \frac{x}{\sqrt{t}} {\Phi}\left( \frac{x}{\sqrt{t}} \right) - \frac{x}{2 \sqrt{t}} \right\};
   \\
\rho(t,x) = \frac{1}{2} U''(t,x) & = & \frac{1}{\sqrt{2 \pi t}} e^{-x^2/2t}; \\
Q(t,x) = \frac{1}{2} \dot{U}(t,x) & = & \frac{1}{2 \sqrt{t}} \frac{e^{-x^2/2t}}{\sqrt{2 \pi}}; \\
\dot{\rho}(t,x) & = & \frac{\rho(t,x)}{2 t^2} (x^2 - t); \\
%E_t & = & (-\sqrt{t}, \sqrt{t}) \\
R_t(x) = \frac{(t-x^2)^+}{2t^2} & \leq &  \frac{1}{2t} =: K(t) .
\end{eqnarray*}
For this example $m(t)= \frac{1}{2t} \int_{-1}^1 (1-y^2) e^{-y^2/2} \frac{dy}{\sqrt{2\pi}} = \frac{1}{t}\frac{e^{-1/2}}{\sqrt{2 \pi}}$ and $n(t)=0$.
}\end{eg}

\begin{eg}[Exponential Brownian motion]
\label{eg:ebm}
{\rm Let $\mu_t$ be the time-$t$ law of exponential Brownian motion.
\begin{eqnarray*}
U(t,x) & = &  2 \Phi \left( \frac{- \ln x}{\sqrt{t}} + \frac{\sqrt{t}}{2} \right) - 2 x \Phi \left( \frac{- \ln x}{\sqrt{t}} - \frac{\sqrt{t}}{2} \right) + x; \\
\rho(t,x) = \frac{1}{2} U''(t,x) & = & \frac{1}{x^{3/2} \sqrt{2 \pi t}} e^{-(\ln x)^2/2t}
e^{-t/8}; \\
Q(t,x) = \frac{1}{2} \dot{U}(t,x) & = &  \frac{\sqrt{x}}{2 \sqrt{2 \pi t}} e^{-(\ln x)^2/2t}
e^{-t/8} = \frac{x^2}{2} \rho(t,x); \\
\dot{\rho}(t,x) & = & \frac{\rho(t,x)}{2 t^2} [(\ln x)^2 - t
- t^2/4 ]; \\
%E_t & = & (\exp( -\sqrt{t+t^2/4}), \exp( \sqrt{t + t^2/4})) \\
R_t(x) = \left[ \frac{1}{8} + \frac{1}{2t} - \frac{(\ln x)^2}{2 t^2} \right]^+ & \leq &
1/(2t) + 1/8 =: K(t).
\end{eqnarray*}
Note that this example is asymmetric about the mean 1 and $n(t) \neq 1$.

In general there are few examples of fake exponential Brownian motions --- a rare exception is \cite{Hobson:13}.
The lognormal law does not have the scaling properties of the normal distribution, and as a consequence it seems harder to construct
elegant mimicking processes.}
\end{eg}

\begin{eg}[Continuous uniform]
\label{eg:uniform}
{\rm Let $\mu_t \sim  U[-t,t]$. Although $\gamma_t$ has a density, $\lambda_t$ consists of a pair of point masses.
\begin{eqnarray*}
U(t,x) & = & \frac{(t^2 + x^2)}{2t} I_{\{ -t < x < t \} } + |x| I_{ \{ |x| \geq  t \} } ;    \\
\rho(t,x) = \frac{1}{2} U''(t,x) & = & \frac{1}{2t} I_{\{ -t < x < t \} } ; \\
Q(t,x) = \frac{1}{2} \dot{U}(t,x) & = & \frac{1}{4t^2} (t^2 - x^2) I_{\{ -t < x < t \} } ; \\
\gamma_t(dx) & = & \frac{dx}{2t^2} I_{\{ -t < x < t \} } ; \\
\lambda_t & = & \frac{1}{2t} \delta_{-t} + \frac{1}{2t} \delta_{t} ; \\
%E_t & = & (-t,t) \\
R_t(x) = \frac{1}{t} I_{\{ -t < x < t \} } & \leq &  \frac{1}{t} := K(t) .
\end{eqnarray*}
In this case $\gamma_t = \frac{1}{t}U[-t,t]$ and $\lambda_t =\frac{1}{t} U \{ -t,t \}$ so that the signed measure $q_t$
does not have a density.
%Nonetheless, for a bounded continuous function $H$,
%\[  \frac{d}{dt} \left[ \int H(x) \mu_t(dx) \right] =\frac{d}{dt}\left[ \int_{-t}^t H(x) \frac{dx}{2t} \right] = \frac{H(t)}{2t}
%+ \frac{H(-t)}{2t} - \int_{-t}^t \frac{H(x)}{2t^2} dx =  \int H(x)
%q_t(dx). \]
Nonetheless, for $B = (\underline{b}, \overline{b})$ with $0 < \underline{b} < \overline{b}$ (for example) we find
$\mu_t(B) = \frac{ \overline{b} \wedge t -\underline{b} \wedge t}{2t}$  and $\frac{d}{dt} \mu_t(B) =
\frac{1}{2t} I_{ \{ \underline{b} < t < \overline{b} \} } -\frac{ \overline{b} \wedge t -\underline{b} \wedge t}{2t^2} = q_t(B)$
for all $t$ except $t = \underline{b}$ and $t= \overline{b}$. Hence $\mu_t ((\underline{b}, \overline{b})) = \mu_{\epsilon}((\underline{b}, \overline{b}))
+ \int_\epsilon^t q_s((\underline{b}, \overline{b})) ds$.
}\end{eg}

\begin{eg}
\label{eg:mass0}
{\rm Suppose $\mu_t \sim e^{-t} \delta_0 + (1-e^{-t}) U[-1,1]$.
\begin{eqnarray*}
U(t,x) & = & \left[ \frac{(1 + x^2)}{2}(1 - e^{-t}) + e^{-t}|x| \right] I_{\{ -1 < x < 1 \} } + |x| I_{ \{ |x| \geq  1 \} } ;    \\
Q(t,x) = \frac{1}{2} \dot{U}(t,x) & = & \frac{e^{-t}}{4} (1 + x^2 - 2|x|)) I_{\{ -1 < x < 1 \} } ; \\
\gamma_t & = & e^{-t} \delta_0  ; \\
\lambda_t(dx) & = & \frac{e^{-t}}{2} I_{\{ -1 < x < 1 \} } dx  ; \\
%E_t & = & (-t,t) \\
R_t(x)  = I_{\{ x=0 \}} & \leq & 1 := K(t) .
\end{eqnarray*}
}\end{eg}

Note that in the first three of these examples if $\bbT =(\epsilon,T)$ with $\epsilon >0$ then we may take $\overline{K} = K(\epsilon)$, but if $\epsilon=0$
then there is no global bound on $R_t(x)$.

\begin{eg}[Two point Discrete Uniform]
{\rm Suppose $\mu_t \sim \frac{1}{2} \delta_{ t } + \frac{1}{2}
\delta_{  -t  }$. Then
$U(t,x)  = \max \{ t, |x| \}$ and $Q(t,x) = I_{\{ -t < x < t \} }$. This indicator function cannot be written as the difference
of two convex functions, and this example lies out the scope of our analysis. Note, it is not the presence of atoms in this example which means that
our approach does not work, but rather the fact that the locations of the atoms move over time. }
\end{eg}

\subsection{The construction}
\label{ssec:construction}
Our goal is to construct a pure-jump Markov martingale with the required marginals.
The idea behind the process $X$ we construct is that it stays
where it is whenever and wherever
possible. In Section~\ref{sec:minV} this will be one of the features which leads to $X$ having
minimal variation. It also leads to a simple construction:
most especially we can write down the process directly, and no limiting procedures are used.

%Suppose $X$ is at $x$ at time $t$. Then if $x \in I^\mu_t \setminus I^\gamma_t$, the
%process stays where it is. If $x \in I^\gamma_t$ then the process jumps at rate
%$R_t(x)$, and otherwise stays where it is. Conditional on there being a
%jump, $X$ jumps according to a rule which maps $\gamma_t$ to $\lambda_t$.
%The resulting process is a pure-jump Markov process. Note that the rate of jumps is bounded by $\overline{K}$. Hence,
%the process has finitely many jumps in a finite interval, almost surely.

The construction is based upon the twin ingredients of a Poisson point process and a
family of martingale transports. The Poisson point process determines whether there is a jump,
and then conditional on there being a jump the martingale transports determine the new location of the process.
  Our problem is to specify
the ingredients in such a way that the resulting jump process has the desired marginals.

First consider the point process.
Let $(N_t)_{t > 0}$ be a Poisson point process on $(0,\infty) \times
(0,\infty)
\times (0,1)$ with density $ds \times dh \times du$. Let $\sN$ be the
set of locations of events of the Poisson process. We think of the three
coordinates as time, height and label. If we want to restrict the time domain of $N$ to an interval $A$ e.g. $A=(\epsilon,T]$ then we write
$N^{A}$ for the Poisson point process $(N_t)_{t \in A}$ with density $ds \times dh \times du$ on
$A \times (0,\infty) \times (0,1)$.

Conditional on there being an event of the Poisson Process at $(s,h,u)$ for
which $R_s(X_s)<h$ then the process jumps at time $s$. The location of the jump destination depends on the
label $u$ and the martingale transport.
If the process is at $x$ at time $s$, if the first two coordinates of $N$ determine that there should be a jump, and if $\pi^x_s$
denotes the regular conditional distribution of
the post-jump location, then we set the post-jump location to be $F^{-1}_{\pi^x_s}(u)$ where $u$ is the label and $F_{\pi^x_s}(y) = \pi^x_s((-\infty, y])$.
By convention (this will not impact on whether $X$ has the required marginals) at the
time of the jump $X$ takes the value it jumps to rather than the value it jumps from. Hence, the process $X$ we construct
is c\'{a}dl\'{a}g.

Now consider the family of martingale transports.
Suppose $\nu_0$ and $\nu_1$ are probability measures on $I$.
Let $\sM_1(\nu_0,\nu_1)$ be the set of martingale couplings or martingale transports from $\nu_0$ to
$\nu_1$. In particular, $\pi \in \sM_1(\nu_0,\nu_1)$ is a probability measure on $I \times I$ which satisfies the marginal constraints
$\int_y \pi(dx,dy) = \nu_0(dx)$ and $\int_x \pi(dx,dy) = \nu_1(dy)$ and the martingale constraint
$\int_y(y-x) \pi(dx,dy) = 0$. Then $\pi$ has an interpretation as a rule for the `transport' of mass from $\nu_0$ to $\nu_1$
which respects the martingale property.
We have that $\sM_1(\nu_0,\nu_1)$ is non-empty if and only
if $\nu_0$ and $\nu_1$ are increasing in convex order which we now assume.
Any $\pi \in \sM_1(\nu_0,\nu_1)$ can be written in disintegration form
$\pi(dx,dy) = \pi^x(dy) \nu_0(dx)$. Then $\int_y \pi^x(dy)=1$, $\int_y y \pi^x(dy) = x$ and $\int_x \pi^x(dy) \nu_0(dx) = \nu_1(dy)$.
If $\nu_0$ and $\nu_1$ are orthogonal then $\pi^x(\{x\})=0$.

The notion of a martingale transport generalises easily to any pair $(\tilde{\nu}_0,\tilde{\nu}_1)$
of measures with the same finite total mass which are in convex order.
Let $m$ be the mass of $\tilde{\nu}_0$.
If $\hat{\nu}_i = \tilde{\nu}_i/m$ and if $\hat{\pi} \in \sM_1(\hat{\nu}_0, \hat{\nu}_1)$ has disintegration representation
$\hat{\pi}(dx,dy) = \hat{\nu}_0(dx)\pi^x(dy) $ then $\tilde{\pi}$ given by
$\tilde{\pi}(dx,dy) = m \hat{\pi}(dx,dy) = m \hat{\nu}_0(dx)\pi^x(dy)  = \tilde{\nu}_0(dx) \pi^x(dy) $
is a martingale transport of $\tilde{\nu}_0$ to $\tilde{\nu}_1$.
Let $\sM = \sM(\tilde{\nu}_0, \tilde{\nu}_1)$ denote the set of martingale transports in this wider sense.

Several martingale couplings are known and can be described in more-or-less explicit terms. Many of the couplings are associated with
solutions of a Skorokhod embedding problem for
Brownian motion for a non-trivial initial law. For a pair of probability measures in increasing convex order,
Chacon and Walsh~\cite{ChaconWalsh:76} give a general family of solutions, of which the construction in Hobson~\cite{Hobson:98} is a special case related
to the Az\'ema-Yor~\cite{AzemaYor:79} embedding (the Az\'ema-Yor embedding covers the case of an initial law which is a point mass). The Hobson~\cite{Hobson:98} solution
inherits from the Az\'ema-Yor embedding the property of having a very explicit construction.
Bertoin and Le Jan~\cite{BertoinLeJan:92} give a martingale coupling based on local times
in the case where $\nu_0$ and $\nu_1$ are orthogonal. Hobson and Neuberger~\cite{HobsonNeuberger:12} gave the coupling which maximises
the $L^1$ norm of the increment.
Recently, Beiglb\"ock and Juillet~\cite{BeiglbockJuillet:14} introduced the `left-curtain' martingale coupling.
For our purposes the construction of Hobson and Pedersen~\cite{HobsonPedersen:02} has certain desirable properties in that
the first step of the construction is to fix any mass in common between $\nu_0$ and $\nu_1$, consistent with the general idea
of leaving mass unmoved wherever possible. The Hobson-Pedersen construction is one of the most explicit martingale coupling for general measures, and we describe
it in detail in Section~\ref{ssec:HPconstruction} below.
In Section~\ref{sec:minV} we will exploit the coupling of Hobson and Klimmek~\cite{HobsonKlimmek:15}
which like the constructions of \cite{HobsonNeuberger:12} and \cite{BeiglbockJuillet:14} has certain optimality properties.

Suppose we have our Poisson point process and a family of martingale transports $(\pi_t)_{t \in \bbT}$ with $\pi_t \in \sM(\gamma_t,\lambda_t)$.
%Suppose $\bbT = [\epsilon, T]$ and suppose $X_\epsilon \sim \mu_\epsilon$.
We begin by explaining, first informally and in the regular case, why the process $X=(X_t)_{t \in \cbbT}$ we construct has density $\rho(t,x)$.

Suppose that at time $v \in \bbT$ the process is at $x$ and $x$ is in the support of $\gamma_v$ and more generally that $x$ is in the support of $\gamma_s$
for $v \leq s \leq T$. Then $(X_t)_{v
\leq t}$
stays constant until $\tau  = \inf \{ s > v ; \exists (s,h,u) \in \sN \cap (h
\leq R_s(x)) \}$. Note that since $R_s(x) < K(s) \leq \overline{K}$ we have
that $\tau > v$ almost surely.
Then, from the properties of the Poisson process, for fixed $v$, $t$ and $x$,
\[ \Prob^{v,x} \left( \{
%((s,h,u) \in (v,t) \times (h \leq R_s(x)) \times(0,1)
\sN \cap (v \leq s \leq t) \cap (h \leq R_s(x))
\} = \emptyset \right)
= \exp \left( - \int_v^t R_s(x) ds \right) =
\exp \left( \int_v^t \frac{\dot{\rho}(s,x)}{\rho(s,x)}  ds \right)
= \frac{\rho(t,x)}{\rho(v,x)} , \]
and it follows that
\[ \Prob(X_t \in dx) = \Prob(X_v \in dx) \frac{\rho(t,x)}{\rho(v,x)} =
\rho(t,x) dx .\]

Now suppose that $x$ is not in the support of $\gamma_v$. Consider the rate of change in density at $x$
at time $v$, assuming that $X_v$ has density $\rho(v,x)$. We have
\begin{equation}
\label{eq:heuristic}
\Prob(X_{v + dv} \in dx) = \Prob(X_v \in dx) +
\int_{z \in I^\gamma_v} \Prob(X_v \in dz)  dv  \; R_v(z) \pi^z_v(dx).
\end{equation}
If $\pi_t \in \sM(\gamma_t,\lambda_t)$ then the right-hand-side of (\ref{eq:heuristic}) can be rewritten as
\begin{eqnarray*}
 \rho(v,x) dx + \int_{z \in I^\gamma_v} \rho(v,z)R_v(z) dz \pi_v^z(dx) dv
& = & \rho(v,x) dx + \int_{z \in I^\gamma_v} \gamma_v(dz) \pi^z_v(dx)  dv \\
& = & \rho(v,x) dx + \lambda_v(dx) dv \\
& = & \rho(v,x)dx + \dot{\rho}(v,x) dx \; dv ,%\\
%& = & \rho(t+dt,x) dx
\end{eqnarray*}
and hence $\Prob(X_{v+dv} \in dx) = \rho(v+dv,x) dx$ as required.

%Define the generator
%\[ \sA_t h(x) = I_{\{x \in E_t\}} q(t,x) \left[ \frac{(x - a(t,x))}{b(t,x) - a(t,x)} \{ h(b(t,x)) - h(x) \} + \frac{(b(t,x)-x)}{b(t,x) - a(t,x)} \{ h(a(t,x)) - h(x) \} \right] \]

Our goal now is to extend this heuristic argument to prove that $X_t$ has law $\mu_t$.
For $t \in \bbT$ define the operator $\sA^{\pi_t}_t$ by
\[ \sA_t^{\pi_t} h(x) = R_t(x) \left\{ \int_y \pi_t^x(dy) h(y) - h(x) \right\} \]
and recall that this is non-zero only if $x \in \supp( \gamma_t)$.
%The idea; consider a time-inhomogeneous Markov process with generator $\sA = (\sA_t)_{t \geq \epsilon}$
%where
%\[ \sA_t = \sA^{\pi_t} =  I_{\{x \in E_t\}} q(t,x) \{ \int_y \pi_{t}^x(dy) H(y) - H(x) \} \]
Suppose $(\pi_t)_{t \in \bbT}$ is chosen with $\pi_t \in \sM(\gamma_t,\lambda_t)$ and let $X = (X_t)_{t \in \bbT}$ be the
Markov process with generator $\sA^{\pi_t}_t$ and initial law $X_\epsilon \sim \mu_\epsilon$. Then $X$ is a pure-jump Markov process.

\begin{ass}
\label{ass:3}
Suppose the family $(\pi_t)_{t \in \bbT}$ is such that
%\begin{enumerate}\item[(a)]
$\pi_t \in \sM(\gamma_t,\lambda_t)$ has representation $\pi_t(dx,dy) = \gamma_t(dx) \pi_t^x(dy)$ and
%\item[(b)]
for all measurable sets $B$, $\pi^x_t(B)$ is measurable in $x$ and $t$.
%\end{enumerate}
\end{ass}

Our main theorem says that under the assumptions of this section, the process $X = (X_t)_{t \in \cbbT}$ has marginals $(\mu_t)_{t \in \cbbT}$.
The proof of this result relies on results of Feller~\cite{Feller:40} and Feinberg et al~\cite{FeinbergMandavaShiryaev:13}
on the existence and uniqueness of solutions of the Kolmogorov forward equations for pure-jump Markov processes.
Feller~\cite{Feller:40} proves existence under an assumption that jump rates are continuous in $t$, and this condition is weakened to measurability in
\cite{FeinbergMandavaShiryaev:13}.
The idea in these papers is to use the fact that there are only finitely many jumps (almost surely, in $\bbT$) to construct the process and its transition functions
as a limit of processes which have at most $n$ jumps. These pure-jump proceses with a bounded number of jumps are simple to characterise, and it is possible to
write down the probability transition functions in terms of integrals involving the transition rates.

\begin{thm}
\label{thm:mimic}
Suppose Assumptions~\ref{ass:1}, \ref{ass:2} and \ref{ass:3} hold. Suppose $\bbT = (\epsilon,T)$ and suppose
$X_\epsilon \sim \mu_\epsilon$. Then for $t \in \cbbT$, $X_t \sim \mu_t$.
\end{thm}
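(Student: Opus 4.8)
The plan is to recognise both the flow of marginal laws of the constructed process and the prescribed flow $(\mu_t)_{t\in\cbbT}$ as solutions of the same time-inhomogeneous Kolmogorov forward equation associated with the generator $\sA^{\pi_t}_t$, and then to invoke uniqueness of that solution. Write $p_t(B)=\Prob(X_t\in B)$. First I would check that $X$ is a genuine, non-exploding pure-jump Markov process: since $R_t(x)\le K(t)\le\overline K$ uniformly on $\bbT$, the total jump rate is bounded, so almost surely there are only finitely many jumps on $\cbbT$. This is exactly the regime covered by Feller~\cite{Feller:40} and, with the measurability hypothesis of Assumption~\ref{ass:3} in place of continuity of the rates, by Feinberg et al~\cite{FeinbergMandavaShiryaev:13}: the process, its transition function, and the Kolmogorov forward equation are all well posed, and for bounded measurable $h$ (in particular $h=I_B$) the flow $p_\cdot$ satisfies
\[ \int h\,dp_t \;=\; \int h\,d\mu_\epsilon \;+\; \int_\epsilon^t\!\!\int \big(\sA^{\pi_s}_s h\big)(x)\,p_s(dx)\,ds, \qquad t\in\cbbT . \]

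The heart of the argument is to verify that $(\mu_t)_{t\in\cbbT}$ solves the \emph{same} equation with the \emph{same} initial datum. Taking $h=I_B$, the inner integrand is $\sA^{\pi_s}_s I_B(x)=R_s(x)\{\pi^x_s(B)-I_B(x)\}$; integrating this against $\mu_s$ and using that $R_s(x)\,\mu_s(dx)=\gamma_s(dx)$ --- the defining property of the Radon--Nikodym derivative of Assumption~\ref{ass:2}(d) --- together with $\int\pi^x_s(B)\,\gamma_s(dx)=\lambda_s(B)$ --- the marginal property of $\pi_s\in\sM(\gamma_s,\lambda_s)$ --- gives $\int\sA^{\pi_s}_s I_B\,d\mu_s=\lambda_s(B)-\gamma_s(B)=q_s(B)$. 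The asserted identity for $(\mu_t)$ thus reduces to $\mu_t(B)=\mu_\epsilon(B)+\int_\epsilon^t q_s(B)\,ds$, which is precisely Assumption~\ref{ass:2}(e); joint measurability in $(s,x)$ and integrability in $s$ of the integrand are supplied by Assumption~\ref{ass:3} and the bound $|\sA^{\pi_s}_s I_B|\le 2\overline K$ on $\bbT$. So $(p_t)$ and $(\mu_t)$ are both measure-valued solutions of the forward equation on $\bbT$ issuing from $\mu_\epsilon$; the uniqueness statement of~\cite{FeinbergMandavaShiryaev:13}, available because the bounded jump rates preclude explosion (so the minimal solution is the only solution), forces $p_t=\mu_t$ for $t\in\bbT$. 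The endpoint $t=\epsilon$ is the imposed initial law, and $t=T$ follows from weak continuity of both flows on $\cbbT$ (the right-hand side of the forward equation is Lipschitz in $t$, and $\Prob(X_t\neq X_T)\le\overline K\,(T-t)\to0$, while $\mu_t\Rightarrow\mu_T$ by Assumption~\ref{ass:2}(a)) together with their agreement on the dense subset $\bbT$.

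I expect the main obstacle to be the passage from the informal ``rate-of-change of density'' heuristic of the preceding subsection to the precise measure-valued formulation under which the Feller/Feinberg existence-and-uniqueness theory applies: fixing the admissible class of test functions, justifying the differentiation of $t\mapsto\int h\,dp_t$ (equivalently the integral identity displayed above) for the constructed jump process, and --- crucially --- confirming that the non-explosion hypothesis really holds so that \emph{uniqueness}, and not merely existence of a minimal solution, can be used. Once that framework is set up, the algebraic identity $\int\sA^{\pi_s}_s I_B\,d\mu_s=q_s(B)$ combined with Assumption~\ref{ass:2}(e) is what does the work.
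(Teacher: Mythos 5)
Your proposal follows the same overall strategy as the paper: identify the law flow $p_t$ of the constructed process and the prescribed flow $(\mu_t)$ as solutions of the same integrated forward equation with initial datum $\mu_\epsilon$, and the algebraic verification you give for $(\mu_t)$ --- namely $\int \sA^{\pi_s}_s I_B\,d\mu_s = \lambda_s(B)-\gamma_s(B)=q_s(B)$ via $R_s(x)\mu_s(dx)=\gamma_s(dx)$ and the marginal property of $\pi_s$, combined with Assumption~\ref{ass:2}(e) --- is exactly the computation in the paper's display (\ref{eq:KFEmu}); the treatment of the endpoint $t=T$ via the bounded jump rate is also the paper's argument. The one substantive divergence is the uniqueness step. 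The paper uses Feller/Feinberg et al.\ only for the \emph{existence} half, i.e.\ to assert that the law of the constructed process satisfies (\ref{eq:KFEnu}); it then proves uniqueness by hand: setting $\eta_t=\nu_t-\mu_t$ and $\Delta_t=\sup_B|\eta_t(B)|$, subtracting the two forward equations gives $\Delta_t\le 2\overline K\int_\epsilon^t\overline\Delta_s\,ds$, and Gronwall's lemma forces $\Delta_t\equiv 0$. You instead appeal to a uniqueness theorem in \cite{FeinbergMandavaShiryaev:13} on the grounds that bounded rates preclude explosion, so the minimal solution is the only one. That is the point to be careful about: the uniqueness results there are formulated for transition functions, i.e.\ solutions of the forward equation started from a point initial condition, whereas what you need is uniqueness among measure-valued flows issuing from the general initial law $\mu_\epsilon$ --- the flow $(\mu_t)$ is not a priori of the form $\int P(\epsilon,x;t,\cdot)\,\mu_\epsilon(dx)$, which is precisely what is to be proved. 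So either you must check that the cited uniqueness statement genuinely covers flows with an arbitrary initial distribution and merely measurable, time-dependent rates, or you should close the loop yourself; the two-line Gronwall argument above is exactly that closing step, and inserting it makes your proof coincide with the paper's. (A minor remark: for $t=T$ the paper deduces $\mu_s(B)\to\mu_T(B)$ setwise from Assumption~\ref{ass:2}(e), which is a little more direct than the weak-convergence route you sketch via Assumption~\ref{ass:2}(a), though both identify the limit.)
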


\begin{proof}
For $t \in \cbbT$, let $\nu_t$ denote the law of $X_t$. Then $\nu_\epsilon = \mu_\epsilon$. Further, since $R_t(x)$ is bounded, $I$ is $q$-bounded in the language of
Feinberg et al~\cite{FeinbergMandavaShiryaev:13},  and by Feller~\cite[Equation (37)]{Feller:40} or
Feinberg et al~\cite[Corollary 4.2]{FeinbergMandavaShiryaev:13},
for measurable sets $B$,
\begin{equation}
\label{eq:KFEnu}
 \nu_t(B) = \mu_\epsilon(B) + \int_\epsilon^t ds \int_{x \in I} \nu_s(dx) R_s(x) \pi^x_s(B) -  \int_\epsilon^t \int_B R_s(x) \nu_s (dx).
 \end{equation}
Conversely, by construction and design the family $(\mu_t)_{t \in \cbbT}$ is such that
\begin{eqnarray}
 \mu_t(B) & = & \mu_\epsilon(B) + \int_\epsilon^t ds \; q_s(B) \nonumber \\
 &  = & \mu_\epsilon(B) + \int_\epsilon^t ds \int_B \lambda_s(dy) - \int_\epsilon^t ds \int_B \gamma_s(dx) \nonumber \\
 &  = & \mu_\epsilon(B) + \int_\epsilon^t ds \int_{x \in I} \int_{y \in B} \gamma_s(dx) \pi^x_s(dy) - \int_\epsilon^t ds \nonumber \int_B \gamma_s(dx) \\
 & = & \mu_\epsilon(B) + \int_\epsilon^t ds \int_{x \in I} \mu_s(dx)  R_s(x) \pi^x_s(B) - \int_\epsilon^t ds \int_B R_s(x) \mu_s(dx) .
\label{eq:KFEmu}
%& = & \int \nu_t(dx) R_t(x) \pi^x_t(B) - \int_B R_t(x) \nu_t (dx)
\end{eqnarray}
Let $(\eta_t)_{t \in \cbbT}$ be the family of signed measures given by $\eta_t = \nu_t - \mu_t$,
and note that $\eta_t(I)=0$. Let $\Delta_t$ denote the total variation distance between $\nu_t$ and
$\mu_t$ so that $\Delta_t = \sup_{B} |\eta_t(B)|$, and let
$\overline{\Delta}_t = \sup_{\epsilon \leq s \leq t} \Delta_s$.
Then, comparing (\ref{eq:KFEnu}) and (\ref{eq:KFEmu})
\[ \eta_t(B) =  \int_\epsilon^t ds \: \int_{x \in I} \eta_s(dx)  R_s(x) \left[ \int_{x \in I} \pi^x_t(B) - I_{ \{ x \in B \}} \right]  \]
and it follows that
\[ |\eta_t(B)| \leq  \int_\epsilon^t ds \int_{x \in I} | \eta_s(dx) | K(s) \leq  2  \int_\epsilon^t K(s) \Delta_s ds
\leq 2 \overline{K} \int_{\epsilon}^t \overline{\Delta}_s ds . \]
Hence $\Delta_t \leq 2 \overline{K}\int_\epsilon^t \overline{\Delta}_s ds$ and since the right-hand-side of this last expression is increasing,
$\overline{\Delta}_t \leq  2 \overline{K} \int_\epsilon^t \overline{\Delta}_s ds$. It follows from Gronwall's Lemma
(Ethier and Kurtz~\cite[Appendix, Theorem 5.1]{EthierKurtz:84})
that $\Delta_t = \overline{\Delta}_t = 0$ and $X_t \sim \mu_t$ for $t \in \bbT$. Finally, since the rate of jumps is bounded,
$\Prob(X_T \in B) = \lim_{s \uparrow T}\Prob(X_s \in B) = \lim_{s \uparrow T} \mu_s(B) = \mu_T(B)$ by Assumption~\ref{ass:2}(e).
\end{proof}

\subsection{An explicit martingale transport}
\label{ssec:HPconstruction}

The goal of this section is to show that even when $\gamma_t$ and $\lambda_t$ are arbitrary (integrable) measures we can write down a
martingale coupling.

Fix $t$ and write $\gamma$ as shorthand for $\gamma_t$, and similarly $\lambda$ as shorthand for $\lambda_t$, where the orthogonal measures
$\gamma$ and $\lambda$
have the same finite total mass and the pair $(\gamma,\lambda)$ is increasing in convex order. Our aim is to write down a
martingale transport $\pi(dx,dy)$ which has marginals $\gamma$ and $\lambda$ and respects the martingale condition. In fact we write down $\pi^x(dy)$; then
$\pi(dx,dy) = \gamma(dx) \pi^x(dy)$. The construction we give is based on the solution to the Skorokhod embedding problem with
non-trivial initial law given in Hobson
and Pedersen~\cite{HobsonPedersen:02}. The only difference with respect to that paper is that there $\gamma$ and $\lambda$ are probability measures,
but the extension to measures with finite mass does not introduce major complications.

Let $Q(x) = \frac{1}{2} \{ \E^{X \sim \lambda}[|X- x|] - \E^{X \sim \gamma}[|X- x|]\}$. Since $\gamma$ and $\lambda$ are in convex order we have $Q \geq 0$, and
$Q$ is continuous in $x$. Define
\[ \xi(z) = \sup_{w: w < z} \frac{Q(z) - Q(w)}{z-w} \]
and let $g(z) \leq z$ be the largest of those values where the supremum is attained, or if the supremum is not attained then $\xi(z) = Q'(z-)$ and we set
$g(z)=z$. Now define
\( \Xi(z) = \lambda( (-\infty, z)) - \xi(z). \)
Then (Hobson and Pedersen~\cite[Theorem 2.1]{HobsonPedersen:02}) $\Xi$ is an increasing, left-continuous function.

Decompose $\lambda$ into a continuous part and a sum of atoms: $\lambda(A) = \int_{z \in A} \lambda^c(dz) + \sum_{z \in A} \lambda( \{ z \} )$.
For $z>x$ define $\Upsilon^x(z)$ by
\[ \Upsilon^x(z) =
\exp \left( - \int_{(x,z)}  \frac{\lambda^c(dw)}{\gamma((-\infty, w)) - \Xi(w)} \right)
\prod_{u \in [x,z)} \left( 1 - \frac{\lambda (\{u \}) }{\gamma((-\infty, u)) - \Xi(u)} \right)^+ .\]
$\Upsilon^x$ can be thought of as a survivor function of a probability measure.

For $x$ in the support of $\gamma$ we define $\pi^x$ separately on $z> x$ and $z<x$.
For $z > x$, we set
\begin{equation}
\label{eq:HPdef1}
 \pi^x([z,\infty)) =  \int_z^\infty dw \exp\left( - \int_x^w \frac{dv}{ v - g(v)} \right) |d\Upsilon^x(w)|
\end{equation}
and for $z < x$,
\begin{equation}
\label{eq:HPdef2}
\pi^x((-\infty,z]) = \int_x^\infty I_{ \{ w : g(w) \leq z \}} \frac{dw}{w - g(w)} \exp \left( - \int_{x}^{w} \frac{du}{u - g(u)} \right)
\Upsilon^x(w) .
\end{equation}
%Let $A^+ = \{ z : \Sigma(z+) = \gamma((-\infty,z])\}$, $A^- = \{ z : \Sigma(z) = \gamma((-\infty,z))\}$
%and let $A = A^+ \cup A^-$. Then, if $\overline{x} \in A \cap
%\{y:  y > x \}$ we have $\exp( - \int_x^{\overline{x}} \frac{du}{u - g(u)} ) \Upsilon^x(\overline{x}) = 0$ (\cite[Lemma 5.1]{HobsonPedersen:02})
%and the upper limit to the outside integrals in
%(\ref{eq:HPdef1}) and (\ref{eq:HPdef2}) can be changed to $\overline{x}$.

The following result is an immediate consequence of Proposition 5.4 in \cite{HobsonPedersen:02}.
\begin{prop}[Hobson and Pedersen~\cite{HobsonPedersen:02}]
With $\pi^x$ as above and $\pi(dx,dy) = \gamma(dx) \pi^x(dy)$ we have $\pi \in \sM(\gamma,\lambda)$.
\end{prop}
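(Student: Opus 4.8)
The plan is to verify directly that the kernel $\pi^x(dy)$ defined by \eqref{eq:HPdef1}--\eqref{eq:HPdef2} is a probability measure with barycentre $x$, and that $\int_x \gamma(dx)\pi^x(dy) = \lambda(dy)$. The cleanest route, as the statement suggests, is not to redo this from scratch but to appeal to Proposition~5.4 of \cite{HobsonPedersen:02}, which establishes exactly these properties in the probability-measure setting. So the first step is to recall the precise content of that proposition: there $\gamma$ and $\lambda$ are probability measures in convex order, and the construction produces a stopping time (equivalently a martingale coupling) whose transition kernel is precisely the $\pi^x$ written above; Proposition~5.4 asserts that the resulting coupling has the correct marginals and is a martingale. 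The task here is therefore purely a \emph{rescaling} argument: reduce the finite-mass case to the probability case.

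Concretely, I would argue as follows. Let $m$ be the common total mass of $\gamma$ and $\lambda$, and set $\hat\gamma = \gamma/m$, $\hat\lambda = \lambda/m$. The key observation is that the functions entering the construction are invariant under this normalization in the relevant way: $\hat Q(x) = \tfrac12\{\E^{\hat\lambda}|X-x| - \E^{\hat\gamma}|X-x|\} = Q(x)/m$, so $\hat\xi(z) = \xi(z)/m$ and $\hat g = g$ (the location of the maximizing slope is unchanged by an overall positive scaling of $Q$); moreover $\hat\Xi(z) = \hat\lambda((-\infty,z)) - \hat\xi(z) = \Xi(z)/m$, and $\hat\gamma((-\infty,w)) - \hat\Xi(w) = (\gamma((-\infty,w)) - \Xi(w))/m$. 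Feeding this into the definition of $\Upsilon^x$, every ratio $\lambda^c(dw)/(\gamma((-\infty,w))-\Xi(w))$ and $\lambda(\{u\})/(\gamma((-\infty,u))-\Xi(u))$ is unchanged under $(\gamma,\lambda)\mapsto(\hat\gamma,\hat\lambda)$, so $\hat\Upsilon^x = \Upsilon^x$; and since $g$ is also unchanged, the expressions \eqref{eq:HPdef1}--\eqref{eq:HPdef2} give the \emph{same} kernel $\pi^x$ whether one starts from $(\gamma,\lambda)$ or from $(\hat\gamma,\hat\lambda)$. Hence $\pi^x = \hat\pi^x$, where $\hat\pi$ is the Hobson--Pedersen coupling for the probability pair $(\hat\gamma,\hat\lambda)$.

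Now Proposition~5.4 of \cite{HobsonPedersen:02} applies to $(\hat\gamma,\hat\lambda)$ — one must check the pair is in convex order, which is inherited from $(\gamma,\lambda)$ since scaling both measures by $1/m$ preserves convex order — and yields $\hat\pi \in \sM_1(\hat\gamma,\hat\lambda)$: that is, $\int_y \hat\pi^x(dy)=1$ for $\hat\gamma$-a.e.\ $x$, $\int_y y\,\hat\pi^x(dy)=x$, and $\int_x \hat\gamma(dx)\hat\pi^x(dy) = \hat\lambda(dy)$. Multiplying the last identity through by $m$ gives $\int_x \gamma(dx)\pi^x(dy) = m\int_x \hat\gamma(dx)\hat\pi^x(dy) = m\,\hat\lambda(dy) = \lambda(dy)$, while the first two identities are mass-independent and carry over verbatim. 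By the definition of $\sM(\gamma,\lambda)$ given in Section~\ref{ssec:construction} — precisely $\tilde\pi(dx,dy) = m\hat\pi(dx,dy) = \gamma(dx)\pi^x(dy)$ with $\hat\pi \in \sM_1(\hat\gamma,\hat\lambda)$ — this says exactly $\pi \in \sM(\gamma,\lambda)$, completing the proof.

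The only genuinely delicate point is the scale-invariance of $g$ (hence of $\xi$, $\Xi$, and $\Upsilon^x$): one needs that replacing $Q$ by $Q/m$ changes the supremum defining $\xi(z)$ only by the factor $1/m$ and leaves the \emph{set} of maximizing $w$ (and thus the largest such $w$, namely $g(z)$) untouched — and likewise in the non-attained case that $(Q/m)'(z-) = Q'(z-)/m$ with $g(z)=z$ either way. This is immediate, but it is the hinge of the reduction; everything else is bookkeeping with the factor $m$. Thus the main "obstacle" is really just checking that the construction of \cite{HobsonPedersen:02} is stated in enough generality (or transparently enough) that this rescaling is legitimate, which the authors have already flagged by saying "the extension to measures with finite mass does not introduce major complications."
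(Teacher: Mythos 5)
Your proposal is correct and follows essentially the same route as the paper, which simply invokes Proposition~5.4 of Hobson and Pedersen and notes that passing from probability measures to measures of common finite mass "does not introduce major complications." Your explicit verification that $\xi$, $g$, $\Xi$ and $\Upsilon^x$ are appropriately invariant under the normalisation $(\gamma,\lambda)\mapsto(\gamma/m,\lambda/m)$ is exactly the bookkeeping the paper leaves implicit.
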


\begin{eg}{\rm
Suppose $\gamma \sim \frac{1}{t}U[-t,t]$ and $\lambda \sim \frac{1}{2t} \delta_{-t} + \frac{1}{2t} \delta_{t}$ (as
is the case for Example~\ref{eg:uniform}). Then $Q(x) = \frac{1}{4t^2} (t^2 - x^2) I_{ \{ -t < x < t \} }$.

We find $\xi(z) = \frac{1}{4t^2} (t-z)$ for $-z < t \leq z$ (and $\xi(z)=0$ otherwise), and $g(z) = - t$ for
$-z < t \leq z$ (and $g(z)=z$ otherwise). Then $\Xi(z) = \frac{1}{4t^2} (t+z)^+$ for $z \leq t$ and
for $x \in (-t,t)$, $\Upsilon^x(z) = I_{ \{ z < t \} }$. Note that for $z \leq t$,
\( \int_x^z \frac{dw}{w-g(w)} = \int_x^z \frac{dw}{t+w} = \ln \frac{t+z}{t+x} \)
and hence $\exp (- \int_x^z \frac{dw}{w-g(w)} )= \frac{t+x}{t+z}$. Then for $-t<x<t$ and $z>x$ we have
\( \pi^x([z,\infty)) = \frac{t+x}{2t} I_{ \{ z \leq t \} }. \)

For $-t < x < t$ and $z< x$ we find $\pi^x((-\infty,z]) = I_{ \{ -t \leq z \} }\int_x^t \frac{dw}{t+w} \frac{t+x}{t+w} =
I_{ \{ -t \leq z \} } [-\frac{t+x}{t+w}]^t_{w=x} = I_{ \{ - t \leq z \} } \frac{t-x}{2t}$.

The situation can be summarised as $\pi^x = \frac{t-x}{2t} \delta_{-t} + \frac{t+x}{2t} \delta_{t}$.
}\end{eg}

\section{Extension to $[0,T]$}
\label{sec:extension}

In many of our examples the bound $K(t)$ diverges at $t=0$ and the methods of the previous section are only sufficient to
construct a process on $[\epsilon, T]$ for $\epsilon>0$. (A rare exception is Example~\ref{eg:mass0}.)
In the case where $K(t)$ diverges at zero we give two approaches to constructing a process on $[0,T]$. The first approach is based on
the Kolmogorov consistency condition and is non-constructive. For related ideas in a similar context, see Oleszkiewicz~\cite{Oleszkiewicz:08}.
The second approach requires extra assumptions, and makes use of time-reversal.

\subsection{Consruction via the Daniell-Kolmogorov Consistency Theorem}
%Suppose now we are given a family $(\mu_t)_{0 < t <\leq T}$ and suppose that $(\mu_t)_{t >0}$ converges
%weakly to $\delta_{\overline{\mu}}$. Then since each element $\mu_t$ is integrable
%(and since the family $(\mu_t)$ is increasing in convex order we have that $U(t,x) \downarrow |x- \overline{\mu}|$ as $t \downarrow 0$.

Let $\epsilon$ be a parameter in $(0,T)$. Let $X^{(\epsilon)} = (X^{(\epsilon)}_t)_{\epsilon \leq t \leq T}$
be the Markov martingale with initial law $\mu_\epsilon$ and marginals $(\mu_t)_{\epsilon \leq t \leq T}$
constructed as in the previous section from the family of martingale transports
$(\pi^{(\epsilon)}_t)_{\epsilon < t \leq T}$ and the realisation
$\sN^{(\epsilon)} = (\sN^{(\epsilon)}_t)_{\epsilon < t \leq T}$ of the Poisson point process $(N^{(\epsilon)}_t)_{\epsilon < t \leq T}$.
Write $X^{(\epsilon)} = (X^{(\epsilon)}_t)_{\epsilon \leq t \leq T} = X^{(\epsilon)}(X^{(\epsilon)}_{\epsilon},
(\sN^{(\epsilon)}_t)_{\epsilon < t \leq T})$ to indicate that given the family of martingale transports the path
$(X^{(\epsilon)}_t)_{\epsilon \leq t \leq T}$ is a deterministic function of the initial value (at time $\epsilon$)
and the realisation of the Poisson point process.

Suppose now that we are given a family of martingale transports $(\pi_t)_{0 < t \leq T}$ and a Poisson point process
$N$ on $(0,T] \times (0,\infty) \times(0,1)$. For any $\delta$ in $(0,T)$ we can consider the restrictions of
$(\pi_t)_{0 < t \leq T}$ and $(N_t)_{0 < t \leq T}$ to $(\delta, T]$.
Write $\pi^{(\delta)}=(\pi^{(\delta)}_t)_{\delta < t \leq T} := (\pi_t)_{\delta < t \leq T}$ and
$N^{(\delta)}=(N^{(\delta)}_t)_{\delta < t \leq T} := (N_t)_{\delta < t \leq T}$ for these restrictions.
Then define
$X^{(\delta)} = (X^{(\delta)}_t)_{\delta \leq t \leq T} = X^{(\delta)}(X^{(\delta)}_{\delta},
(\sN^{(\delta)}_t)_{\delta < t \leq T})$ where $X^{(\delta)}_{\delta}$ is a random variable with law $\mu_\delta$.
It follows that
if $\delta < \epsilon$ and $X^{(\epsilon)}_{\epsilon}$ is taken to equal $X^{(\delta)}_{\epsilon}$ then
there is an identification or coupling between processes started at different times and
$(X^{(\epsilon)}_{t})_{\epsilon \leq t \leq T}$ and $(X^{(\delta)}_{t})_{\epsilon \leq t \leq T}$ are identical, omega by omega.

For $0<t_1 < t_2 < \cdots < t_n \leq T$ define
\begin{equation}
\label{eq:fdds}
\mu_{ t_1 , t_2 , \ldots , t_n} =
\sL( X^{(\epsilon)}_{t_1}, X^{(\epsilon)}_{t_2}, \ldots , X^{(\epsilon)}_{t_n}).
\end{equation}
By the above comments,
$\mu_{ t_1 , t_2 , \ldots , t_n}$ does not depend on $\epsilon$ (provided $\epsilon \leq t_1$). Also it is clear that the measures
$(\mu_{ t_1 , t_2 , \ldots , t_n})_{n \in \bbN, 0<t_1 < t_2 < \cdots < t_n \leq T}$ are consistent, since any pair
are the finite dimensional distributions of a stochastic process $(X^{(\epsilon)}_t)_{\epsilon \leq t \leq T}$
for some $\epsilon$ sufficiently small.
Hence, by the Kolmogorov consistency theorem there exists a stochastic process $\hat{X} = (\hat{X}_t)_{0<t \leq T}$
such that the finite dimensional distributions of $\hat{X}$ are given by the measures in (\ref{eq:fdds}).

Let $\bbD_{k,T} = \{ j 2^{-k} T ; 1 \leq j \leq 2^k \}$ and let $\bbD_T = \cup_{k \geq 1} \bbD_{k,T}$
so that $\bbD_T$ is the set of dyadic rationals on $(0,1]$ rescaled to $(0,T]$.
Consider $(\hat{X}_t)_{t \in \bbD_T}$. Fix $0 < t_0 < t_1 \leq T$ with $t_i \in \bbD_T$. Then with $\epsilon<t_0$,
and using the right-continuity of $X^{(\epsilon)}$,
\begin{eqnarray*}
\Prob( \hat{X}_t = \hat{X}_{t_0}, \forall t \in \bbD_T \cap [t_0,t_1])
& = & \lim_k \Prob( \hat{X}_t = \hat{X}_{t_0}, \forall t \in \bbD_{k,T} \cap [t_0,t_1]) \\
& = & \lim_k \Prob( {X}^{(\epsilon)}_t = {X}^{(\epsilon)}_{t_0}, \forall t \in \bbD_{k,T} \cap [t_0,t_1]) \\
& = & \Prob( {X}^{(\epsilon)}_t = {X}^{(\epsilon)}_{t_0}, \forall t \in [t_0,t_1]) \\
& =  & \E \left[ \exp \left( - \int_{t_0}^{t_1} R_t(X^{(\epsilon)}_{t_0}) dt \right) \right] \geq \exp( - \overline{K}(t_1 - t_0)),
\end{eqnarray*}
for some constant $\overline{K}$. In particular, on a set $\Omega_1$ of probability 1, $\hat{X} : \bbD_T \mapsto \R$
is piecewise constant and has finitely many jumps on any closed sub-interval of $(0,T]$.

On $\Omega_1$ let $X : (0,T] \mapsto \R$ be defined by $X_t =\lim_{s \downarrow t, s \in \bbD_T} \hat{X}_s$, and
on the null set $\Omega_1^c$ let $X_t = \overline{\mu}$. Then every path of $X$ is piecewise constant and right-continuous on $(0,T]$
and has finitely many jumps on
any closed sub-interval of $(0,T]$.

Suppose either that $K(t)$ is bounded on $(0,T)$ or that $\mu_t$
converges in $L^1$ to a point mass, necessarily at $\overline{\mu}$. In the former case let $X_0 =\lim_{s \downarrow 0, s \in \bbD_T} \hat{X}_s$
(which exists since $\hat{X}$ has only finitely many jumps in $(0,T]$ if $K$ is bounded) and in the latter case, let $X_0 =\overline{\mu}$.

It remains to check that $X$ has the desired marginals, but for $t>0$ this is immediate given Assumption~\ref{ass:2}(e):
\[ \Prob(X_t \in B) = \lim_{s \downarrow t, s \in \bbD_T} \Prob ( \hat{X}_s \in B )
= \lim_{s \downarrow t, s \in \bbD_T} \mu_s(B) = \mu_t(B). \]
When $t=0$, if $K(t)$ is bounded on $(0,T)$ then the same argument applies, otherwise $X_0 \sim \mu_0 \sim \delta_{\overline{\mu}}$ by definition.

The fact that $X$ is a martingale (relative to its own natural filtration) follows as in
Oleszkiewicz~\cite{Oleszkiewicz:08}.

\subsection{Extension to $[0,T]$ via time reversal}
Under some extra conditions it is possible to sketch an alternative and more direct construction of a process on $[0,T]$.
In particular in this section, in addition to the hypotheses of Assumptions~\ref{ass:1}, \ref{ass:2} and \ref{ass:3} we assume that we
the family of marginals $(\mu_t)_{t \in [0,T]}$ is such that $\lim_{t \downarrow 0} U(t,x)
= | x - \overline{\mu}|$ (and then $\mu_0 \sim \delta_{\overline{\mu}}$) and further that the Radon-Nykodym
derivative $\lambda_t(dy)/\mu_t(dy)$ is bounded in $y$ by a
function $\tilde{K}(t)$ which is bounded on every interval of the form $[\epsilon,T]$ with $\epsilon>0$.

Throughout this section we assume $T$ is fixed.

Temporarily fix $\epsilon$.
Using the ideas of the previous section we can construct $(X^{(\epsilon)}_t)_{\epsilon \leq t \leq T}$ such that $X^{(\epsilon)}_t \sim \mu_t$. Now let
$\tilde{X}^{(\epsilon)} =(\tilde{X}^{(\epsilon)}_s)_{0 \leq s \leq T-\epsilon}$ be given by
$\tilde{X}^{(\epsilon)}_s = X^{(\epsilon)}_{T-s}$. Then $\tilde{X}^{(\epsilon)}$ is the time-reversal of $X^{(\epsilon)}$ and $\tilde{X}^{(\epsilon)}_s
= X^{(\epsilon)}_{T-s} \sim \mu_{T-s} := \tilde{\mu}_s$
Indeed,
the process $\tilde{X}$ is a pure-jump Markov process with state-dependent jump intensities $\tilde{R}_s(y)$
and conditional jump measures $\tilde{\pi}^y_s$ where
\[ \tilde{R}_s(y) := \frac{\lambda_{T-s}(dy)}{\mu_{T-s}(dy)} \hspace{20mm}
\tilde{\pi}^y_s(dx) := \frac{\pi_{T-s}(dx,dy)}{\lambda_{T-s}(dy)} = \frac{\gamma_{T-s}(dx) \pi^x_{T-s}(dy)}{\lambda_{T-s}(dy)} . \]
In particular, $\tilde{X}^{(\epsilon)}$ does not depend on $\epsilon$ in the sense that if $\tilde{X}^{(\epsilon)}_0 = \tilde{X}^{(\delta)}_0$ then
$\tilde{X}^{(\epsilon)}$ and $\tilde{X}^{(\delta)}$ are pathwise identical on the set $[0,(T-\delta)
\wedge (T-\epsilon)]$ on which they are both defined.

Hence we may drop the superscript. Let $\tilde{X}$ be the pure-jump Markov process with initial law $\tilde{X}_0 \sim  \tilde{\mu}_0 =\mu_T$
defined using $\tilde{R}_s$ and $\tilde{\pi}^y_s$ as in the Section~\ref{ssec:construction}, except that here we take $\tilde{X}$ to be left-continuous.
(Note that $\tilde{\pi}^y_{s}$ does not respect the martingale property, so that the process $\tilde{X}$ will not be a martingale.)
Then $\tilde{X}$ is defined on $[0,T)$, and $\tilde{X}_s \sim \tilde{\mu}_{s} = \mu_{T-s}$. Now set $\overline{X}_0 \sim \delta_{\overline{\mu}}$ and
$\overline{X}_t = \tilde{X}_{T-t}$. Then $\overline{X} = (\overline{X}_t)_{0 \leq t \leq T}$ is well-defined on $[0,T]$ and
$\overline{X}_t \sim \mu_t$. Further $\overline{X}$ agrees with $X^{(\epsilon)}$ on $[\epsilon, T]$ and so $\overline{X}$ inherits the martingale property from
$X^{(\epsilon)}$.

\begin{eg} {\rm
Suppose $\mu_t \sim e^{-t} \delta_0 + (1-e^{-t}) U[-1,1]$. Then $\gamma_t \sim e^{-t} \delta_t$ and $\lambda_t \sim e^{-t}U[-1,1]$.
We find that for $y \in (-1,1) \setminus \{0 \}$, $\tilde{R}_s(y) = \frac{e^{-(T-s)}}{1-e^{-(T-s)}}$ and
$\tilde{\pi}^y_s = \delta_0$.

Then the time reversed process is well-defined on $[0,T]$. The process starts at 0 with probability $e^{-T}$ and if it starts at 0 then it remains
constant at zero throughout.
Otherwise it starts at a non-zero point chosen according to the uniform distribution on $[-1,0)\cup (0,1]$, and stays constant until
the time of the first event of a time-inhomogeneous Poisson process with rate $\frac{e^{-(T-s)}}{1-e^{-(T-s)}}$,
at which point it jumps to zero. Zero acts as an absorbing state.

In forward time the process starts at 0 and jumps to a uniformly distributed location on $[-1,1]\setminus \{0 \}$ at rate 1.
}\end{eg}

\begin{eg}{\rm
Suppose that for $t \in [0,T]$, $\mu_t \sim U[-t,t]$. In this case $\lambda_{T-t}$ is not absolutely equivalent with respect to $\mu_{T-t}$, and all calculations
for this example should be viewed as formal calculations. (Nonetheless, this example is sufficiently concrete that it is easy to verify that the proposed processes
have all the right properties.) We find $\tilde{R}_s(y) = 0$ except at $y = \pm (T-s)$, and at those points it is infinite. The interpretation is that $\tilde{X}$
only jumps when $\tilde{X}_s = T-s$. Conditional on their being a jump we find
$\tilde{\pi}^{T-s}_s(dx) = \frac{(T-s+x)}{2(T-s)^2} dx$  and $\tilde{\pi}^{-(T-s)}_s(dx) = \frac{(T-s-x)}{2(T-s)^2} dx$.

The process $\tilde{X}$ can be described as follows. The initial value $x_0$ is uniform on $[-T,T]$. The process remains constant until $\tau_1 = T- |x_0|$.
At time $\tau_1$ the process jumps to a new point $x_1$ in the interval $(-(T-\tau_1), T- \tau_1)$ chosen according to the density
$f(x_1) = \frac{(T - |x_0|+x_1 sgn(x_0))}{2(T-|x_0|)^2}$.
The process can then be constructed inductively. Suppose that after $k$ jumps and at time $s$ the process is at $x_k$
where necessarily $x_k \in [-(T-s),T-s]$. The process remains constant until $\tau_{k+1} = T - |x_{k}|$ at which point the process jumps to a new point in
$(-|x_k|, |x_k|) = (-(T-\tau_{k+1}),T-\tau_{k+1})$ chosen with density $\frac{(T - |x_k|+x_{k+1} sgn(x_k))}{2(T-|x_k|)^2}$.
}\end{eg}

\section{Minimising the expected total variation}
\label{sec:minV}

In this section we want to explain how the ideas of Section~\ref{sec:candidate}
can be used to construct an extremal process consistent with a given set of marginals under an additional
assumption on the relationship between the measures $\gamma_t$ and $\lambda_t$. In particular, the process
we construct will have smallest expected total variation amongst all processes consistent with a given set
of marginals. As an application we construct the fake Brownian motion with smallest expected total variation.

Our analysis is motivated by the martingale coupling constructed in Hobson and Klimmek~\cite{HobsonKlimmek:15}. A key assumption in the construction
of that paper is that mass is moved using a martingale transport from an interval in the centre to the tails. We make a similar assumption, which is augmented
with some further regularity conditions. A similar condition to Assumption~\ref{ass:dispersion}(a) is stated in Henri-Labord\`{e}re
and Touzi~\cite{Henri-LabordereTouzi:15} as the difference in the cumulative distribution functions (of $\lambda_t$ and $\gamma_t$)
having a unique local maximum.

\begin{ass} %[Dispersion Assumption]
\label{ass:dispersion}
\begin{enumerate}
\item[(a)] (Dispersion assumption)
There exists a family of open intervals $(E_t)_{t \in \bbT}$ such that $\supp(\gamma_t) \subseteq E_t$ and $\supp(\lambda_t) \subseteq E_t^c$.
Moreover, the intervals $(E_t)_{t \in \bbT}$ are increasing: for $s<t$ we have $E_s \subseteq E_t$;
\item[(b)] (Regularity)
$U(t,x) \in C^{1,2}$ and $Q_t(x)$, $Q_t'(x)$ and $\Gamma_t'(x)$ are continuous in $t$ for all $x$.
\item[(c)] (strict positivity of densities on their domains) For each $t \in \bbT$
$\dot{\rho}(t,x) < 0$ for $x \in E_t$ (and then $E_t =  (\ell_{\gamma}(t), r_{\gamma}(t))$) and $\dot{\rho}(t,x) > 0$ for $x \in
({\ell}_{\lambda}(t),\ell_{\gamma}(t)) \cup (r_{\gamma}(t),r_{\lambda}(t))$;
\item[(d)] (Finite activity) $\int (|x| + 1) |\dot{\rho}(t,x)| dx < \infty$ for each $t \in \bbT$
and $\int_{t \in \bbT} dt \int (|x| + 1) |\dot{\rho}(t,x)| dx < \infty$.
\end{enumerate}
%{\bf Is this excluding the uniform example?}
\end{ass}

Consider the Brownian motion (Example~\ref{eg:bm}) and exponential Brownian motion (Example~\ref{eg:ebm}) examples.
For the former $E(t) = (-\sqrt{t},\sqrt{t})$ and $I_t = \R$. For the latter, $E(t) = (\exp( -\sqrt{t+t^2/4}), \exp( \sqrt{t + t^2/4}))$ and $I_t = (0,\infty)$.
In both cases all the elements of Assumption~\ref{ass:dispersion} are satisfied.
However, in the uniform example (Example~\ref{eg:uniform}) and in Example~\ref{eg:mass0}, we find $U(t,x) \notin C^{1,2}$, and these examples do not satisfy
the regularity conditions (b) and (c) of Assumption~\ref{ass:dispersion}.
Nonetheless, the dispersion assumption (Assumption~\ref{ass:dispersion}(a)) is satisfied in Example~\ref{eg:uniform}: mass is moving from a central region
to the tails and there exists an open interval $E_t = (\ell_\gamma(t),r_\gamma(t))=(-t,t)$ such that
$\dot{\rho}(t,x) < 0$ on $E_t$, and $\supp(\lambda_t) \subseteq E_t^c$.
In Example~\ref{eg:mass0} the spirit of Assumption~\ref{ass:dispersion}(a) is satisfied in the sense that mass is moving from a central region to the tails, but in this
case $E_t = \{0 \}$ is not open.
For these examples it is possible to prove optimality directly, see the discussion in
Example~\ref{eg:uniform2} below.

\subsection{Construction of the martingale transport for fixed $t$}
\label{ssec:constructionfixedt}
In this section we will consider $t$ to be fixed and generally write $t$ as a subscript. Later we will consider $t$ as a parameter.

Our goal is to show how to write down a martingale transport $\pi_t \in \sM(\gamma_t,\lambda_t)$ under Assumption~\ref{ass:dispersion}.
In particular, the idea is that $\pi_t$ has a decomposition $\pi_t(dx,dy) = \gamma_t(dx) \pi^x_t(dy)$ where $\pi^x_t$ has a binomial distribution:
\begin{equation}
\label{eq:pidef}
 \pi^x_t = \frac{b_t(x) - x}{b_t(x)-a_t(x)} \delta_{a_t(x)} + \frac{x - a_t(x)}{b_t(x)-a_t(x)} \delta_{b_t(x)} .
\end{equation}
Thus, for this candidate martingale transport, mass at $x$ moves down to $a_t(x)$ or up to $b_t(x)$, where $a_t(x) < x < b_t(x)$,
and the probabilities of up and down moves are such that the
martingale property is satisfied. Our goal is to explain how to choose $a_t(x)$ and $b_t(x)$ such that  $\pi_t \in \sM(\gamma_t,\lambda_t)$.
A key element of the construction will be that $a_t(x)$ and $b_t(x)$ are decreasing functions of $x$.

Following Hobson and Klimmek~\cite{HobsonKlimmek:15} for $x \in E_t$ and $y>x$ define
$\sQ_{t,x}(y) = Q_t(x) + Q_t'(x)(y-x) - Q_t(y)$.
For $x \in E_t$ define
\begin{equation}
\label{eq:phidef}
 \phi_t(x) = {\max}_{\alpha < x < \beta} \left\{
\frac{\sQ_{t,x}(\beta) - Q_t(\alpha)}{\beta - \alpha} \right\}
\end{equation}
and let the maximum be attained at the pair
$(\alpha = a_t(x),\beta=b_t(x))$.
Under Assumption~\ref{ass:dispersion} (and especially the dispersion and strict positivity elements)
$Q_t$ is strictly convex and increasing on $(\ell_{\lambda}(t), \ell_\gamma(t))$,
strictly concave on $(\ell_\gamma(t),\ell\gamma(t))$
and strictly convex and decreasing on $(r_\gamma(t), r_\lambda(t))$.
This guarantees the existence and uniqueness of optimisers $a_t(x)$ and $b_t(x)$ in the right-hand-side of (\ref{eq:phidef})
with $a_t(x) < \ell_\gamma(t)$ and $b_t(x) > r_\gamma(t)$.

\begin{prop}
\label{prop:differentiable}
$a_t(\cdot)$ is continuously differentiable in $x$ on $E_t$ and satisfies
\[ a'_t(x) \dot{\rho}(t, a_t(x)) = \frac{b_t(x)-x}{b_t(x)-a_t(x)}
\dot{\rho}(t, x) . \]
Similarly $b_t(\cdot)$ is continuously differentiable on $E_t$ and satisfies
\[ b'_t(x) \dot{\rho}(t, b_t(x)) =
- \frac{x-a_t(x)}{b_t(x)-a_t(x)}
\dot{\rho}(t,x). \]
\end{prop}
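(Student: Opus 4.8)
The plan is to treat the maximisation (\ref{eq:phidef}) defining $\phi_t(x)$, $a_t(x)$, $b_t(x)$ as a smooth two-dimensional problem and to extract the two identities from its first-order conditions together with the envelope theorem. Fix $t$ and write $g(\alpha,\beta;x) := \frac{\sQ_{t,x}(\beta) - Q_t(\alpha)}{\beta-\alpha}$, so that $\phi_t(x) = g(a_t(x),b_t(x);x)$. Under the regularity in Assumption~\ref{ass:dispersion}(b) the function $Q_t$ is $C^2$ in $x$ (equivalently $Q_t'' = \dot\rho(t,\cdot)$ is continuous), so $g$ is $C^2$ in $(\alpha,\beta)$ and its $(\alpha,\beta)$-gradient is jointly $C^1$. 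For $x\in E_t$ the maximiser lies with $a_t(x)$ strictly inside $(\ell_\lambda(t),\ell_\gamma(t))$ and $b_t(x)$ strictly inside $(r_\gamma(t),r_\lambda(t))$ (the inner bounds because $Q_t$ is constant outside $(\ell_\lambda(t),r_\lambda(t))$, the outer ones from the strict convexity/concavity structure noted before the statement), so the optimum is interior and $\partial_\alpha g = \partial_\beta g = 0$ there. Clearing denominators, these conditions read $Q_t'(a_t(x)) = \phi_t(x)$ and $Q_t'(x) - Q_t'(b_t(x)) = \phi_t(x)$: the optimal chord is the common supporting line, of slope $\phi_t(x)$, touching $Q_t$ at $a_t(x)$ and $y\mapsto\sQ_{t,x}(y)$ at $b_t(x)$.

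Next I would obtain the asserted $C^1$ regularity from the implicit function theorem applied to $\nabla_{(\alpha,\beta)}g(\alpha,\beta;x)=0$. Because $\beta-\alpha$ is affine, and because at a critical point the $(\alpha,\beta)$-gradient of the numerator of $g$ equals $\phi_t(x)$ times the $(\alpha,\beta)$-gradient of $\beta-\alpha$, the usual cross terms cancel and the Hessian of $g$ in $(\alpha,\beta)$ at the maximiser reduces to $\frac{1}{b_t(x)-a_t(x)}\,\mathrm{diag}\big(-Q_t''(a_t(x)),\,-Q_t''(b_t(x))\big)$. By Assumption~\ref{ass:dispersion}(c), $Q_t''(a_t(x))=\dot\rho(t,a_t(x))>0$ and $Q_t''(b_t(x))=\dot\rho(t,b_t(x))>0$, so this matrix is negative definite, in particular invertible; the implicit function theorem then gives a local $C^1$ solution which, by uniqueness of the maximiser, must agree with $(a_t(\cdot),b_t(\cdot))$ on $E_t$. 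Hence $a_t$, $b_t$, and therefore $\phi_t(x)=g(a_t(x),b_t(x);x)$, are $C^1$ on $E_t$.

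The identities then follow by differentiation. The envelope theorem gives $\phi_t'(x) = \partial_x g(\alpha,\beta;x)\big|_{(\alpha,\beta)=(a_t(x),b_t(x))}$; since $\partial_x\sQ_{t,x}(\beta) = Q_t''(x)(\beta-x)$ and $\partial_x(\beta-\alpha)=0$, this yields the intermediate formula $\phi_t'(x) = \frac{b_t(x)-x}{b_t(x)-a_t(x)}\,\dot\rho(t,x)$. Differentiating the first-order condition $Q_t'(a_t(x)) = \phi_t(x)$ in $x$ and using $Q_t'' = \dot\rho(t,\cdot)$ gives $a_t'(x)\,\dot\rho(t,a_t(x)) = \phi_t'(x)$, which is the first displayed identity of the Proposition. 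Differentiating $Q_t'(x)-Q_t'(b_t(x)) = \phi_t(x)$, substituting the value of $\phi_t'(x)$, and using $1-\frac{b_t(x)-x}{b_t(x)-a_t(x)} = \frac{x-a_t(x)}{b_t(x)-a_t(x)}$, gives the analogous identity relating $b_t'(x)\,\dot\rho(t,b_t(x))$ to $\dot\rho(t,x)$ times the mass $\pi^x_t$ places at $b_t(x)$.

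The main obstacle is the regularity step: one must confirm that the maximiser $(a_t(x),b_t(x))$ is non-degenerate and interior — precisely, that it lies strictly inside the two convex tail pieces of $Q_t$, where $\dot\rho(t,\cdot)$ is strictly positive — so that the reduced Hessian above is genuinely invertible and the implicit function theorem applies; this is exactly the role of the dispersion and strict-positivity hypotheses in Assumption~\ref{ass:dispersion}. One also needs that $Q_t$ is $C^2$ in $x$ (equivalently that $\dot\rho(t,\cdot)=Q_t''$ is continuous) under the standing assumptions. The remaining ingredients — the first-order conditions, the envelope identity, and the final algebraic substitutions — are elementary.
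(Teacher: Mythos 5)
Your argument is correct in substance, but it is not the paper's route: the paper proves the proposition essentially by citation, writing $a_t = (Q_t')^{-1}\circ \phi_t \circ \gamma_t((-\infty,\cdot])$ and importing the differentiability of $\phi_t$ and the derivative formulae from Lemmas 5.1--5.2 and Section 3.1 of Hobson and Klimmek~\cite{HobsonKlimmek:15}, whereas you re-derive everything from the optimisation (\ref{eq:phidef}): the first-order conditions $Q_t'(a_t(x))=\phi_t(x)$ and $Q_t'(x)-Q_t'(b_t(x))=\phi_t(x)$, the collapse of the Hessian at a critical point to $\frac{1}{b_t(x)-a_t(x)}\mathrm{diag}\bigl(-\dot{\rho}(t,a_t(x)),-\dot{\rho}(t,b_t(x))\bigr)$, the implicit function theorem, and the envelope identity $\phi_t'(x)=\frac{b_t(x)-x}{b_t(x)-a_t(x)}\dot{\rho}(t,x)$. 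I checked the Hessian computation and the final substitutions; they are right, and the strict positivity of $\dot{\rho}(t,\cdot)$ on $(\ell_\lambda(t),\ell_\gamma(t))\cup(r_\gamma(t),r_\lambda(t))$ is exactly what makes the reduced Hessian invertible. What your route buys is a self-contained proof not leaning on the external reference; what it costs is that you must supply two points the citation silently covers: (i) attainment of the maximum at points interior to the two convex tail pieces (in the Brownian case $\ell_\lambda=-\infty$, $r_\lambda=+\infty$, so the issue is attainment at finite points; the paper asserts this just before the proposition, so you may use it); and (ii) the identification of the local $C^1$ branch produced by the implicit function theorem with $(a_t(\cdot),b_t(\cdot))$ --- uniqueness of the maximiser alone is not quite enough, you also need continuity of the maximiser in $x$ (a routine compactness/maximum-theorem argument) so that for nearby $x$ the maximiser lies in the IFT neighbourhood and hence is the IFT critical point.

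One further remark: your computation yields $b_t'(x)\dot{\rho}(t,b_t(x))=\frac{x-a_t(x)}{b_t(x)-a_t(x)}\dot{\rho}(t,x)$, i.e.\ without the minus sign in the second display of the statement. Your sign is the internally consistent one: $a_t$ and $b_t$ are decreasing and $\dot{\rho}(t,\cdot)>0$ on $(r_\gamma(t),r_\lambda(t))$ while $\dot{\rho}(t,x)<0$ on $E_t$, so both sides of your identity are negative, whereas the right-hand side as printed is positive; moreover the change-of-variables steps in the Corollary immediately following the proposition and in the proof of Theorem~\ref{thm:attain} use precisely your version. The printed minus sign appears to be a slip (a remnant of writing the identity with $\dot{\rho}(t,x)^-$ in place of $\dot{\rho}(t,x)$), so your derivation should not be counted wrong for producing the sign that the rest of the paper actually uses.
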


\begin{proof}
We prove the result for $a_t$, the proof for $b_t$ being similar.

The construction of $a_t$, $b_t$ given in Hobson and Klimmek~\cite{HobsonKlimmek:15} is a martingale transport of probability measures, but generalises easily to
pairs of measures with the same total mass which are increasing in convex order. By construction
$a_t(x) = (Q_t')^{-1} \circ \phi_t \circ {\gamma}_t((-\infty,x]) $. By the continuity and strict positivity of $\dot{\rho}(t,x)$ on $(\ell_\lambda(t),
\ell_{\gamma}(t))$, $(Q'_t)^{-1}$ is continuously differentiable, as is $\gamma_t((-\infty,x])$. Further, by Lemma 5.1 and the argument after Lemma 5.2
of Hobson and Klimmek~\cite{HobsonKlimmek:15}, $\phi_t$ is
continuously differentiable. Hence $a_t(\cdot)$ is continuously differentiable. The expression for the derivative is taken from Section 3.1 of
\cite{HobsonKlimmek:15}. Note that \cite{HobsonKlimmek:15} covers the case of general probability measures
(satisfying Assumption~\ref{ass:dispersion}(a)) including measures with atoms.
\end{proof}

\begin{cor} Define $\pi_t$ by $\pi_t(dx,dy) = \gamma_t(dx) \pi^x_t(dy)$ where $\pi_t^x$ is as given in {\rm (\ref{eq:pidef})}. Then $\pi_t \in \sM(\gamma_t,\lambda_t)$ and
$\pi_t^x(B)$ is measurable in $x$ for all measurable sets $B \subseteq I$.
\end{cor}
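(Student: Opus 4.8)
The plan is to verify the three conditions defining membership in $\sM(\gamma_t,\lambda_t)$ and then the measurability assertion, the only nontrivial ingredient being Proposition~\ref{prop:differentiable}. On $E_t$ we have $a_t(x) < \ell_\gamma(t) \le r_\gamma(t) < b_t(x)$, so the denominator $b_t(x)-a_t(x)$ is bounded away from $0$ and the two weights in (\ref{eq:pidef}) are nonnegative and sum to $1$; hence $\pi^x_t$ is a probability measure and $\int_y \pi_t(dx,dy) = \gamma_t(dx)\int_y\pi^x_t(dy) = \gamma_t(dx)$, so the first marginal is $\gamma_t$. The martingale constraint is the one-line identity $\int_y y\,\pi^x_t(dy) = \frac{(b_t(x)-x)a_t(x) + (x-a_t(x))b_t(x)}{b_t(x)-a_t(x)} = x$, which gives $\int (y-x)\,\pi_t(dx,dy)=0$.

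The substantive point is that the second marginal of $\pi_t$ equals $\lambda_t$. The cleanest route is to observe that (\ref{eq:pidef}) with $a_t,b_t$ the optimisers of (\ref{eq:phidef}) is precisely the Hobson--Klimmek~\cite{HobsonKlimmek:15} martingale coupling; as already noted in the proof of Proposition~\ref{prop:differentiable}, that construction is carried out in \cite{HobsonKlimmek:15} for probability measures but passes verbatim, after normalising by the common mass $m(t)$, to the equal-mass pair $(\gamma_t,\lambda_t)$ in convex order, and it produces an element of $\sM(\gamma_t,\lambda_t)$. Alternatively one can verify the marginal by hand in the regular case: since $\supp(\lambda_t)$ lies in the two tails $(\ell_\lambda(t),\ell_\gamma(t)) \cup (r_\gamma(t),r_\lambda(t))$, it suffices to test against a bounded measurable $H$ supported in the left tail (only the $a_t$-atom then contributes) and to use $\gamma_t(dx) = -\dot\rho(t,x)\,dx$ on $E_t$ together with the first identity of Proposition~\ref{prop:differentiable} to rewrite $\frac{b_t(x)-x}{b_t(x)-a_t(x)}\gamma_t(dx)$ as $-a_t'(x)\,\dot\rho(t,a_t(x))\,dx$; the substitution $y = a_t(x)$, legitimate because $a_t$ is a $C^1$ monotone bijection of $E_t$ onto the left tail (Proposition~\ref{prop:differentiable} and Assumption~\ref{ass:dispersion}(c)), turns $\int_{E_t} H(a_t(x))\frac{b_t(x)-x}{b_t(x)-a_t(x)}\gamma_t(dx)$ into $\int H(y)\,\dot\rho(t,y)\,dy = \int H\,d\lambda_t$. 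The symmetric computation with $b_t$ and the second identity of Proposition~\ref{prop:differentiable} handles the right tail, and adding the two contributions yields the second marginal $\lambda_t$.

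For the measurability claim, expand (\ref{eq:pidef}):
\[
 \pi^x_t(B) = \frac{b_t(x)-x}{b_t(x)-a_t(x)}\,I_{\{a_t(x)\in B\}} + \frac{x-a_t(x)}{b_t(x)-a_t(x)}\,I_{\{b_t(x)\in B\}}.
\]
By Proposition~\ref{prop:differentiable} the maps $a_t,b_t$ are continuous on $E_t$, so the two coefficients are continuous in $x$, while $\{x\in E_t : a_t(x)\in B\} = a_t^{-1}(B)$ and $\{x\in E_t : b_t(x)\in B\} = b_t^{-1}(B)$ are Borel sets (continuous preimages of the Borel set $B$); hence $x\mapsto \pi^x_t(B)$ is measurable. (The value of $\pi^x_t$ for $x\notin E_t$ is irrelevant, since $\pi_t(dx,dy) = \gamma_t(dx)\pi^x_t(dy)$ and $\supp(\gamma_t)\subseteq E_t$.)

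I expect the main obstacle to be the second-marginal identity: one must be certain that the images of $a_t$ and $b_t$ exhaust the two connected components of $\supp(\lambda_t)$ outside $E_t$, and that the change of variables is valid — which is exactly where the $C^{1,2}$-regularity, strict monotonicity, and strict positivity of $\dot\rho$ furnished by Assumption~\ref{ass:dispersion} and Proposition~\ref{prop:differentiable} enter. In the non-regular cases (atomic $\gamma_t$ or $\lambda_t$), the change of variables is not available and one should instead simply quote the Hobson--Klimmek construction, which is stated there for general probability measures satisfying the dispersion hypothesis.
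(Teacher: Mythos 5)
Your argument is correct and is essentially the paper's own proof: the paper likewise verifies the $\lambda_t$-marginal by integrating a bounded test function against $\gamma_t(dx)\pi^x_t(dy)$, invoking the derivative identities of Proposition~\ref{prop:differentiable}, and changing variables $y=a_t(x)$, $y=b_t(x)$ to land on $\int H\,d\lambda_t$, while the measurability of $x\mapsto\pi^x_t(B)$ is dispatched via the monotonicity/continuity of $a_t,b_t$. Your extra checks (first marginal, martingale identity, the explicit preimage argument) and the fallback of quoting the Hobson--Klimmek coupling directly are harmless additions that the paper leaves implicit.
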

\begin{proof}
Suppose $H$ is a bounded, continuous test function. Then, using Proposition~\ref{prop:differentiable} in the second line, and a change of variables in the third,
\begin{eqnarray*}
%\lefteqn{ \int_{I_t} \sA^{\kappa_t}_t H(x) \rho(t,x) dx } \\
\int_x \int_y \gamma_t(dx) \pi_t^x(dy) H(y)
% & = & \int_{E_t}  \dot{\rho(t,x)}^- dx \int_y \left( \frac{b_t(x) - x}{b_t(x)-a_t(x)} \delta_{a_t(x)}(y)H(y)
%+ \frac{x - a_t(x)}{b_t(x)-a_t(x)} \delta_{b_t(x)}(y)H(y) \right) dy  \\
& = & \int_{E_t} \dot{\rho}(t,x)^- dx \left( \frac{(b_t(x)-x)}{b_t(x) - a_t(x)} H(a_t(x)) + \frac{(x - a_t(x))}{b_t(x) - a_t(x)} H(b_t(x))
                 \right) \\
& = & - \int_{E_t} H(a_t(x)) a'_t(x) \dot{\rho}(t,a_t(x)) dx - \int_{E_t} H(b_t(x)) b_t'(x) \dot{\rho}(t,b_t(x)) dx  \\
& = & \int_{(\ell_\lambda(t),\ell_\gamma(t))} H(y) \dot{\rho}(t,y) dy
+ \int_{(r_\gamma(t),r_\lambda(t))} H(y) \dot{\rho}(t,y) dy \\
& = & \int_{I_t} \lambda_t(dy) H(y)
\end{eqnarray*}
and it follows that $\pi_t \in \sM(\gamma_t,\lambda_t)$. Measurability in $x$ of $\pi^x_t(B)$ is clear from the monotonicity of $a_t$ and $b_t$.
\end{proof}

%Remark: in the uniform case $a(t,x) = -t$ and $b(t,x) = t$. Clearly differentiable in $x$, continuous in $t$ etc etc.

Now we introduce some auxiliary variables which will play a role in the evaluation of the minimal expected total variation.
Fix $x_0 \in \cap_{t \in \bbT} E_t$. For $x \in \overline{E}_t = [ \ell_{\gamma}(t), r_{\gamma}(t) ]$ define
\[ \theta_t(x) = \int_{x_0}^x \frac{2}{b_t(z)-a_t(z)} dz \hspace{10mm} \psi_t(x) = \int_{x_0}^x \frac{2(x-x_0) - b_t(z) - a_t(z)}{b_t(z)-a_t(z)} dz. \]
Extend these definitions to $x \notin \overline{E}_t$ by setting
\[ \theta_t(x) = \left\{ \! \begin{array}{ll} \theta_t( a^{-1}_t(x)) & x < \ell_\gamma(t) \\
                                           \theta_t( b^{-1}_t(x)) & x > r_\gamma(t) \end{array} \! \right\}
    \hspace{3mm} \psi_t(x) =  \left\{  \! \begin{array}{ll} \psi_t( a^{-1}_t(x)) + (a_t^{-1}(x)-x)(1 - \theta_t(a_t^{-1}(x))) & x < \ell_\gamma(t) \\
                                           \psi_t( b^{-1}_t(x)) + (x-b_t^{-1}(x))(1 + \theta_t(b_t^{-1}(x))) & x >r_\gamma(t)
                                           \end{array} \! \right\}
\]
Set $L_t(x,y) = L_{\psi_t,\theta_t}(x,y) = |y-x| + \psi_t(x) + \theta_t(x)(y-x) - \psi_t(y)$.
\begin{prop}[Hobson and Klimmek~\cite{HobsonKlimmek:15}]
$L_t(x,y) \geq 0$, with equality for $y \in \{ a_t(x), x, b_t(x) \}$. Further, $\psi_t(x) \leq |x-x_0|$ and $\psi_t$ is convex on $E_t$ and concave on
$(\ell_\lambda(t), \ell_{\gamma}(t))$ and $(r_{\gamma}(t),r_\lambda(t))$.
\label{prop:hk}
\end{prop}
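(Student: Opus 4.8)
The plan is to deduce the proposition from the corresponding statements in Hobson and Klimmek~\cite{HobsonKlimmek:15} after first disposing of the normalisation. The key preliminary observation is that the whole statement is invariant under simultaneously scaling $\gamma_t$ and $\lambda_t$ by a common positive constant: such a scaling multiplies $Q_t$ (and hence $\sQ_{t,x}$) by that constant, so it leaves the maximisers $a_t(x)$, $b_t(x)$ of (\ref{eq:phidef}) unchanged, and $\theta_t$, $\psi_t$ and therefore $L_t$ are built from $\gamma_t,\lambda_t$ only through $a_t$, $b_t$ and the fixed base point $x_0$. Hence I may assume $m(t)=1$, so that $(\gamma_t,\lambda_t)$ is a pair of probability measures in convex order satisfying Assumption~\ref{ass:dispersion}(a), which is exactly the setting of \cite{HobsonKlimmek:15}; the remaining two paragraphs sketch why the cited results yield precisely the present statement.

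For the equalities I would argue directly rather than quote. Since $|y-x|+\theta_t(x)(y-x)$ equals $(\theta_t(x)-1)(y-x)$ for $y\le x$ and $(\theta_t(x)+1)(y-x)$ for $y\ge x$, we have $L_t(x,x)=0$ at once, and for $y=a_t(x)$ and $y=b_t(x)$ the required identities $\psi_t(x)-\psi_t(a_t(x))=(\theta_t(x)-1)(x-a_t(x))$ and $\psi_t(x)-\psi_t(b_t(x))=(\theta_t(x)+1)(x-b_t(x))$ are, on rearrangement, exactly the two extension formulae defining $\psi_t$ off $\overline{E}_t$ (used with $w=a_t(x)$, $a_t^{-1}(w)=x$, respectively $w=b_t(x)$, $b_t^{-1}(w)=x$). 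The bound $\psi_t(x)\le|x-x_0|$ then follows by putting $x_0$ in the role of $x$: one has $\psi_t(x_0)=0=\theta_t(x_0)$ (empty integrals), so $L_t(x_0,x)=|x-x_0|-\psi_t(x)$, which is nonnegative once the main inequality is in hand; this step is legitimate precisely because $x_0\in E_t$ for every $t$. The convexity/concavity of $\psi_t$ I would obtain by differentiating its integral representation and invoking the continuous differentiability and monotonicity of $a_t$, $b_t$ from Proposition~\ref{prop:differentiable} together with the sign conditions of Assumption~\ref{ass:dispersion}(c), which force $\psi_t'$ to be monotone with the right sign of monotonicity on $E_t$ and on each of $(\ell_\lambda(t),\ell_\gamma(t))$, $(r_\gamma(t),r_\lambda(t))$.

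For the inequality $L_t(x,y)\ge0$ I would fix $x$ and write $L_t(x,\cdot)=\ell_x(\cdot)-\psi_t(\cdot)$, where $\ell_x$ is the convex, piecewise-linear function made of the ray through $(x,\psi_t(x))$ of slope $\theta_t(x)-1$ to the left and the ray of slope $\theta_t(x)+1$ to the right. By the equality statement just proved, the graph of $\psi_t$ touches $\ell_x$ at the three points $a_t(x)<x<b_t(x)$, so it suffices to sign $\ell_x-\psi_t$ on each interval these points cut out; on each such piece the claim reduces to comparing a convex (resp.\ concave) function with one of its tangent or chord lines, the tangency at $a_t(x)$, $b_t(x)$ and the kink of $\ell_x$ at $x$ making the comparisons dovetail, and for $y$ outside $[a_t(x),b_t(x)]$ the extension formulae render $\psi_t$ affine with exactly the slope of $\ell_x$, so nonnegativity persists (the case $x$ in a tail is handled the same way after the change of variable $x=a_t^{-1}(\cdot)$ or $x=b_t^{-1}(\cdot)$). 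I expect the main obstacle to be exactly this bookkeeping: $[a_t(x),b_t(x)]$ straddles the core $E_t$ and both tail intervals, across which $\psi_t$ switches between convex and concave, so the inequality must be assembled from several cases while tracking which linear piece of $\ell_x$ is in force; proving the convexity/concavity of $\psi_t$ in the first place is the other spot where genuine computation hides. Both are precisely the calculations carried out in \cite{HobsonKlimmek:15}, so the novelty here is only the scaling reduction of the first paragraph.
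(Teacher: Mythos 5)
Your proposal follows essentially the same route as the paper: the main inequality $L_t\geq 0$, the equalities at $\{a_t(x),x,b_t(x)\}$ and the convexity/concavity of $\psi_t$ are taken from Theorem 4.5 of Hobson--Klimmek (your direct check of the equalities via the extension formulae and your scaling reduction to probability measures are harmless additions, the latter matching the paper's remark that the construction ``generalises easily'' to measures of equal finite mass), and the bound $\psi_t(x)\leq|x-x_0|$ is obtained exactly as in the paper from $0\leq L_t(x_0,x)=|x-x_0|-\psi_t(x)$ using $\psi_t(x_0)=\theta_t(x_0)=0$. So the proposal is correct and in agreement with the paper's proof.
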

\begin{proof}
The only part of this result which is not contained in Theorem 4.5 of Hobson and Klimmek~\cite{HobsonKlimmek:15} is the fact that $\psi_t(x) \leq |x-x_0|$. But
\[ 0 \leq L_t(x_0,x) = |x-x_0| + \psi_t(x_0) + \theta_t(x_0)(x-x_0) - \psi_t(x) = |x-x_0| - \psi_t(x). \]
\end{proof}

\begin{cor}
\label{cor:hk}
For any random variables $X, Y \in L^1$ with $\E[(Y-X)|X] = 0$ we have $\E[\psi_t(Y) - \psi_t(X) ] \leq \E[|Y-X|]$.
\end{cor}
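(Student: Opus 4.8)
The plan is to recognise the claim as nothing more than the pointwise inequality $L_t\ge 0$ of Proposition~\ref{prop:hk} integrated against the joint law of $(X,Y)$, with the martingale constraint exactly killing the linear term $\theta_t(x)(y-x)$. Concretely, from $L_t(x,y) = |y-x| + \psi_t(x) + \theta_t(x)(y-x) - \psi_t(y) \ge 0$ I read off the pointwise bound
\[ \psi_t(y) - \psi_t(x) \le |y-x| + \theta_t(x)(y-x), \]
substitute $X,Y$, take expectations, and use $\E[\theta_t(X)(Y-X)] = \E[\theta_t(X)\,\E[(Y-X)\mid X]] = 0$ to obtain $\E[\psi_t(Y)-\psi_t(X)] \le \E[|Y-X|]$.

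The only thing requiring care is the integrability of $\psi_t(X)$, $\psi_t(Y)$ and $\theta_t(X)(Y-X)$, so that the expectation may be split and the tower property applied. First I would note that $\theta_t$ is uniformly bounded: on $\overline{E}_t$ the integrand $2/(b_t(z)-a_t(z))$ in the definition of $\theta_t$ satisfies $b_t(z)-a_t(z) > r_\gamma(t)-\ell_\gamma(t)$ (since $a_t(z) < \ell_\gamma(t)$ and $b_t(z) > r_\gamma(t)$), so $|\theta_t(x)| \le 2$ for $x \in \overline{E}_t$; and the extension of $\theta_t$ to $x \notin \overline{E}_t$ merely re-uses the values $\theta_t(a_t^{-1}(x))$, $\theta_t(b_t^{-1}(x))$ at points of $E_t$, so $|\theta_t| \le 2$ throughout. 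Since $X,Y \in L^1$, this gives $\theta_t(X)(Y-X) \in L^1$ and legitimises the conditioning step.

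For $\psi_t$ I would combine $\psi_t(x) \le |x-x_0|$ from Proposition~\ref{prop:hk} with the reverse bound obtained from $L_t(x,x_0) \ge 0$, namely $\psi_t(x) \ge \theta_t(x)(x-x_0) - |x-x_0| \ge -3|x-x_0|$; hence $|\psi_t(x)| \le 3|x-x_0|$ and $\psi_t(X),\psi_t(Y) \in L^1$. With every term integrable, taking expectations in the displayed inequality and discarding the vanishing $\theta_t$-term yields the assertion. I do not expect any substantial obstacle here: the content is entirely in Proposition~\ref{prop:hk}, and the only mild nuisance is verifying that $\theta_t$ is bounded so that the martingale constraint can be invoked under the expectation.
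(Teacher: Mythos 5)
Your proof is correct and is essentially the argument the paper intends: the corollary is stated as an immediate consequence of Proposition~\ref{prop:hk}, obtained exactly by integrating $L_t(X,Y)\ge 0$ and using $\E[\theta_t(X)(Y-X)]=\E[\theta_t(X)\E[(Y-X)\mid X]]=0$. Your added integrability checks (boundedness of $\theta_t$ via $b_t(z)-a_t(z)>r_\gamma(t)-\ell_\gamma(t)$ on an integration interval of length at most $r_\gamma(t)-\ell_\gamma(t)$, and $|\psi_t(x)|\le 3|x-x_0|$ from $L_t(x,x_0)\ge 0$) are correct refinements of details the paper leaves implicit.
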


\subsection{Continuity in $t$}
Now we consider the dependence of the various quantities on $t$. Where appropriate we switch to writing $t$ as an argument rather than as a subscript;
for example we now write $a(t,x)$ in place of $a_t(x)$.

\begin{lem}
$a(t,x)$ and $b(t,x)$ are continuous in $t$. Further, $\psi(t,x)$ is continuous in $t$.
\end{lem}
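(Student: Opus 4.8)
The plan is to establish the continuity of $a(t,x)$ and $b(t,x)$ in $t$ by appealing to the variational characterisation \eqref{eq:phidef} together with the regularity hypotheses of Assumption~\ref{ass:dispersion}, and then to deduce continuity of $\psi(t,x)$ from its integral representation in terms of $a$ and $b$. The key point is that $a(t,x)$ and $b(t,x)$ are the \emph{unique} maximisers of a functional which depends continuously on $t$. Concretely, recall from Assumption~\ref{ass:dispersion}(b) that $Q_t(x)$, $Q_t'(x)$ and $\Gamma_t'(x)$ are jointly continuous in $(t,x)$; hence for fixed $x \in \cap_{s} E_s$ the map $(t,\alpha,\beta) \mapsto \frac{\sQ_{t,x}(\beta) - Q_t(\alpha)}{\beta-\alpha}$ is continuous on the relevant domain. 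Under Assumption~\ref{ass:dispersion}(c) the function $Q_t$ is strictly convex on $(\ell_\lambda(t),\ell_\gamma(t))$, strictly concave on $E_t$, and strictly convex on $(r_\gamma(t),r_\lambda(t))$, so the maximiser $(a(t,x),b(t,x))$ is unique and lies in the interior. A standard maximum-theorem / compactness argument then gives: if $t_n \to t$ and (along a subsequence) $a(t_n,x) \to \alpha^*$, $b(t_n,x) \to \beta^*$, passing to the limit in the optimality inequalities shows $(\alpha^*,\beta^*)$ is a maximiser for $t$, hence equals $(a(t,x),b(t,x))$ by uniqueness; since every subsequence has a further subsequence converging to the same limit, the full sequences converge. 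One has to check the limit points cannot escape to the boundary of $E_t$ or to $\pm\infty$ — this is where the strict convexity/concavity and the non-degeneracy of the measures $\gamma_t,\lambda_t$ (which vary continuously, by the continuity of $\ell_\gamma, r_\gamma$, etc., inherited from continuity of $U$ in $C^{1,2}$) are used; the dispersion structure pins $a(t,x) < \ell_\gamma(t)$ and $b(t,x) > r_\gamma(t)$ with the gaps bounded away from zero locally in $t$.

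Having established continuity of $a(t,x)$ and $b(t,x)$ jointly (or at least in $t$ for each fixed $x$, which is what is stated), the continuity of $\psi(t,x)$ for $x \in \overline{E}_t$ follows by writing $\psi(t,x) = \int_{x_0}^x \frac{2(x-x_0) - b(t,z) - a(t,z)}{b(t,z)-a(t,z)} \, dz$ and invoking dominated convergence: as $t_n \to t$, the integrand converges pointwise in $z$, and is dominated by an integrable function because $b(t,z)-a(t,z)$ is bounded away from $0$ locally uniformly in $t$ (again from strict concavity of $Q_t$ on $E_t$) and $a,b$ are locally bounded. For $x \notin \overline{E}_t$ one uses the extension formulas: $\psi(t,x) = \psi(t, a^{-1}(t,x)) + (a^{-1}(t,x) - x)(1 - \theta(t, a^{-1}(t,x)))$ for $x < \ell_\gamma(t)$ (and the symmetric formula for $x > r_\gamma(t)$), so one needs continuity in $t$ of $a^{-1}(t,x)$ and of $\theta(t,\cdot)$. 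Continuity of $\theta(t,x) = \int_{x_0}^x \frac{2}{b(t,z)-a(t,z)}\,dz$ in $t$ is again dominated convergence; continuity of the inverse $a^{-1}(t,x)$ follows from continuity and strict monotonicity of $a(t,\cdot)$ (Proposition~\ref{prop:differentiable} gives $a_t' $ of one sign via the strict positivity of $\dot\rho$) together with the joint continuity of $a$ in $(t,x)$.

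The main obstacle I anticipate is the joint-continuity / no-escape-to-the-boundary part of the first step: one must rule out the optimisers $a(t_n,x), b(t_n,x)$ degenerating (either the two merging, or one drifting to $\pm\infty$, or hitting the endpoints $\ell_\gamma, r_\gamma$) as $t_n \to t$, and this requires quantitative control on $Q_t$ near the boundary of $E_t$ that is uniform for $t$ in a neighbourhood. This is exactly the kind of estimate that Hobson and Klimmek~\cite{HobsonKlimmek:15} establish for a single pair of measures (their Lemma~5.1, Lemma~5.2 and the arguments around them), so the proof will largely consist of checking that those estimates can be taken locally uniformly in $t$, which is legitimate given the joint continuity assumed in Assumption~\ref{ass:dispersion}(b)--(d). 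A secondary technical point is handling $x$ near the (moving) boundaries $\ell_\gamma(t), r_\gamma(t)$, where the formula for $\psi(t,x)$ switches between its two definitions; continuity across the switch is built into the construction (the two expressions agree at $x = \ell_\gamma(t)$ and $x = r_\gamma(t)$, since there $a^{-1}(t,x) = x$, $b^{-1}(t,x)=x$), but this should be noted explicitly.
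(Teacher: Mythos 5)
Your route is genuinely different from the paper's. You aim at continuity of the optimisers $(a(t,x),b(t,x))$ directly, via uniqueness of the maximiser in (\ref{eq:phidef}) together with a compactness/maximum-theorem argument, and then deduce continuity of $\theta$ and $\psi$ by dominated convergence and the extension formulas. The paper instead proves continuity in $t$ of the \emph{value} $\phi(t,x)$ by a two-sided estimate --- the lower bound by inserting the time-$t$ optimiser as a candidate at $t'$, the upper bound by perturbing $Q_{t}$ and $\sQ_{t,x}$ by $\eta$ and using that any optimal pair at $t'$ has $\beta-\alpha>r_\gamma(t')-\ell_\gamma(t')$, which is locally bounded below --- and then reads off continuity of $a$ from the representation $a(t,x)=(Q'_t)^{-1}\circ\phi_t\circ\gamma_t((-\infty,x])$ together with the assumed continuity in $t$ of $Q'_t$ and $\Gamma'_t$. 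The advantage of the paper's route is exactly that it never has to track where the optimisers go as $t'\to t$: the value function is stable under the pointwise-in-$t$ perturbations granted by Assumption~\ref{ass:dispersion}(b), whereas the argmax is not, and that is precisely where your proof is left open.

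Concretely, the confinement step (``no escape to the boundary or to $\pm\infty$'') is named by you as the main obstacle but not actually carried out; deferring it to ``Hobson--Klimmek estimates taken locally uniformly in $t$'' is not an argument, and in the Gaussian or lognormal examples $\ell_\lambda(t)=-\infty$ or $r_\lambda(t)=+\infty$, so escape to infinity is a real possibility that must be excluded. (Merging is easy, since $a(t',x)<\ell_\gamma(t')\leq x\leq r_\gamma(t')<b(t',x)$ and the width of $E_{t'}$ is locally bounded below by the monotonicity of the family $(E_t)$.) The gap can be closed: the easy half of your own argument gives $\liminf_{t'\to t}\phi(t',x)\geq\phi(t,x)>0$, while the first-order condition at the optimum gives $Q'_{t'}(a(t',x))=\phi(t',x)$; since $Q_{t'}$ is nonnegative, convex and nondecreasing to the left of $\ell_\gamma(t')$ with limit $0$ at $-\infty$, one has $Q'_{t'}(a(t',x))\leq Q'_{t'}(\alpha_0)\to Q'_t(\alpha_0)$ for any fixed $\alpha_0\geq a(t',x)$, and choosing $\alpha_0$ so negative that $Q'_t(\alpha_0)<\phi(t,x)/2$ rules out $a(t',x)\to-\infty$ (symmetrically for $b$). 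A second, smaller point: passing to the limit in the optimality inequalities along $a(t_n,x)\to\alpha^*$ needs $Q(t_n,a(t_n,x))\to Q(t,\alpha^*)$, i.e.\ locally uniform rather than merely pointwise convergence in $x$; this does hold because pointwise convergence of the convex functions $\Lambda_{t_n},\Gamma_{t_n}$ is locally uniform, but it should be said. With these two points supplied your argument is a valid alternative, and your treatment of $\psi$ (dominated convergence, continuity of $a^{-1}$, agreement of the two formulas at $\ell_\gamma(t),r_\gamma(t)$) is in fact more detailed than the paper, which simply asserts that continuity of $\psi$ follows.
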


\begin{proof}
We have the representation $a(t,x) = (Q'_t)^{-1} \circ \phi_t \circ \gamma_t (( -\infty,x])$. By assumption, $Q_t'$ and $\Gamma_t'$ are continuous in $t$.
Since $\phi(t,x)$ is continuous in $x$, continuity of $a$ will follow if $\phi(t,x)$ is continuous in $t$.

Recall (\ref{eq:phidef}) and the fact that $\phi(t,x)$ is the slope from $(a(t,x),Q(t,a(t,x)))$ to $(b(t,x),\sQ_{t,x}(b(t,x)))$.
Then, using continuity of $Q_t$ and $Q_t'$ in $t$ (Assumption~\ref{ass:dispersion}(b))
\[ \phi(t',x) \! = \! \sup_{\alpha<x<\beta} \! \frac{\sQ_{t',x}(\beta) - Q(t',\alpha)}{\beta - \alpha} \geq \frac{\sQ_{t',x}(b(t,x)) - Q(t',a(t,x))}{b(t,x) - a(t,x)}
\stackrel{t' \rightarrow t}{\longrightarrow}
\frac{\sQ_{t,x}(b(t,x)) - Q(t,a(t,x))}{b(t,x) - a(t,x)} =  \! \phi(t,x). \]
Conversely, given $\eta >0$, for $t'$ sufficiently close to $t$ we have $Q(t,y) > Q(t',y) - \eta$ and $\sQ_{t,x}(y) < \sQ_{t,x}(y)+ \eta(1 + (y-x))$,
and then if $r_\gamma(t') - \ell_\gamma(t') > r_{\gamma}(t-) - \ell_{\gamma}(t-) - \eta$,
\[ \phi(t',x) \!=\! \sup_{\alpha<x<\beta} \! \frac{\sQ_{t',x}(\beta) - Q(t',\alpha)}{\beta - \alpha} <
\! \sup_{\alpha<x<\beta} \! \frac{\sQ_{t,x}(\beta) - Q(t,\alpha)}{\beta - \alpha} + \eta +  \frac{2 \eta}{\beta-\alpha}
< \phi(t,x) + \eta + \frac{2 \eta}{r_{\gamma}(t-) - \ell_{\gamma}(t-) - \eta}  . \]
Hence for fixed $x$, $\phi(t,x)$ is continuous in $t$.

Continuity of $b(t,x)$ in $t$ is similar, from which continuity of $\psi(t,x)$ follows.
\end{proof}

\begin{cor}
Suppose Assumptions~\ref{ass:1}, \ref{ass:2} and \ref{ass:dispersion} hold. Then $X_t \sim \mu_t$ for $t \in \cbbT$.
\end{cor}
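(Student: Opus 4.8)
The plan is to deduce this corollary directly from Theorem~\ref{thm:mimic} by verifying that the construction of Section~\ref{sec:minV}, under Assumption~\ref{ass:dispersion}, produces a family of martingale transports $(\pi_t)_{t \in \bbT}$ satisfying Assumption~\ref{ass:3}, and that Assumptions~\ref{ass:1} and~\ref{ass:2} are already in force. Since the hypotheses explicitly include Assumptions~\ref{ass:1} and~\ref{ass:2}, the only work is to check Assumption~\ref{ass:3}: that $\pi_t \in \sM(\gamma_t,\lambda_t)$ for each $t$, and that $\pi^x_t(B)$ is jointly measurable in $(x,t)$.

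The first half of Assumption~\ref{ass:3}, namely $\pi_t \in \sM(\gamma_t,\lambda_t)$, is exactly the content of the Corollary following Proposition~\ref{prop:differentiable}, which established that the binomial kernel $\pi^x_t$ of~(\ref{eq:pidef}), built from the Hobson--Klimmek functions $a_t,b_t$, is a martingale transport from $\gamma_t$ to $\lambda_t$. So first I would simply invoke that corollary. For measurability, the earlier corollary already noted that $\pi^x_t(B)$ is measurable in $x$ for fixed $t$, using the monotonicity of $a_t$ and $b_t$; what remains is the joint measurability in $(x,t)$, and here I would appeal to the Lemma on continuity in $t$ (just proved), which gives that $a(t,x)$ and $b(t,x)$ are continuous in $t$ and monotone (hence measurable) in $x$, so that $(t,x) \mapsto (a(t,x), b(t,x))$ is a Carathéodory function and therefore jointly measurable; since $\pi^x_t(B)$ is an explicit (piecewise-linear) function of $x$, $a(t,x)$, $b(t,x)$ and indicators of $a(t,x), b(t,x) \in B$, joint measurability of $\pi^x_t(B)$ follows.

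With Assumptions~\ref{ass:1}, \ref{ass:2} and~\ref{ass:3} all verified for the specific family $(\pi_t)$ constructed here, Theorem~\ref{thm:mimic} applies verbatim and yields $X_t \sim \mu_t$ for $t \in \cbbT$, which is the claim. I would phrase the proof as: ``The family $(\pi_t)_{t \in \bbT}$ with $\pi_t$ as in the Corollary following Proposition~\ref{prop:differentiable} satisfies $\pi_t \in \sM(\gamma_t,\lambda_t)$; joint measurability of $\pi^x_t(B)$ in $(x,t)$ follows from the continuity of $a,b$ in $t$ and their monotonicity in $x$. Hence Assumption~\ref{ass:3} holds, and the conclusion is immediate from Theorem~\ref{thm:mimic}.''

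The main obstacle, such as it is, is purely the joint-measurability bookkeeping: one must be a little careful that the regularity hypotheses of Assumption~\ref{ass:dispersion} (in particular part (b), the continuity of $Q_t, Q_t', \Gamma_t'$ in $t$, and part (d), the finite-activity integrability) are exactly what is needed both for the continuity Lemma to give continuity of $a(t,\cdot), b(t,\cdot)$ in $t$ and for the family $(\gamma_t,\lambda_t)$ to fit the framework of Section~\ref{ssec:construction}. There is also a minor point that Assumption~\ref{ass:dispersion} is a strengthening that implies the regular case of Definition~\ref{def:regular}, so that $R_t(x) = \dot{\rho}(t,x)^-/\rho(t,x) \le K(t)$ and the boundedness hypothesis of Assumption~\ref{ass:2}(d) is inherited; but all of this has effectively been set up in the preceding subsections, so the proof is short.
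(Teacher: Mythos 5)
Your proposal is correct and follows essentially the same route as the paper: the paper's proof simply observes that under Assumption~\ref{ass:dispersion} the measurability requirements of Assumption~\ref{ass:2}(d) and Assumption~\ref{ass:3} are satisfied by the constructed $R_t(x)$ and $\pi^x_t$, and then invokes Theorem~\ref{thm:mimic}. You merely spell out the joint-measurability bookkeeping (continuity in $t$ plus monotonicity in $x$) that the paper leaves as ``clear'', which is fine.
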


\begin{proof}
It is clear that under Assumption~\ref{ass:dispersion} $R_t(x)$ and $\pi^x_t$ are such that the measurability requirements of
Assumption~\ref{ass:2}(d) and Assumption~\ref{ass:3} are
satisfied and hence by Theorem~\ref{thm:mimic} that $X_t \sim \mu_t$.
\end{proof}

\subsection{Minimising expected total variation: the primal approach}
The idea now is to show that, under Assumption~\ref{ass:dispersion}, the process we have constructed has the
smallest possible expected total-variation in the class of martingales with the specified marginals.

Suppose $\epsilon \geq 0$ and let $\cbbT = [\epsilon,T]$.
Let $P^{\cbbT}=\{ t_0,
t_1,
\ldots
t_{N} \}$ be a partition of $\cbbT$ of dimension $N$, i.e. a vector
of length $N+1$ with
$t_0=\epsilon$, $t_k > t_{k-1}$ and $t_{N}=T$. We say $\sP^{\cbbT}
= (P^{\cbbT}_n)_{n \geq 1}$
is a dense
sequence of nested partitions of $[\epsilon,T]$, if $ P^{\cbbT}_n = \{ t^n_0,
t^n_1, \ldots t^n_{N(n)} \}$ is a partition for each $n$, $P^{\cbbT}_{n}
\subseteq \{ t_0^{n+1}, t^{n+1}_1, \ldots
t^{n+1}_1, \ldots t^{n+1}_{N(n+1)} \} = P^{\cbbT}_{n+1}$ and $\lim_{n \uparrow \infty} \max_{1 \leq k \leq
N(n)} |t^n_k - t^n_{k-1}| = 0$.

Let $P_{U,n}^{\cbbT}=\{ t^{U,n}_0,
t^{U,n}_1, \ldots t^{U,n}_{2^n} \}$ be the uniform partition of $[\epsilon,T]$ of dimension $2^n$
in which $t^{U,n}_k = \epsilon + (T-\epsilon) k 2^{-n}$, and let
$\sP^{\cbbT}_U
= (P^{\cbbT}_{U, n})_{n \geq 1}$. Then $\sP^{\cbbT}_U$ is a
dense sequence of nested partitions.

\begin{defn} Let $f : [\epsilon, T] \mapsto \R$ be any function.
The total variation of $f$ along a partition $P^{\cbbT}$ is
\[ \sV(P^{\cbbT}, f) =
\sum_{k=1}^{N} | f(t_k) - f(t_{k-1})| .
\]

The total variation of $f$ along a dense sequence of nested partitions $\sP^{\cbbT}$ is
\[ \sV(\sP^{\cbbT}, f) =
\lim_n \sum_{k=1}^{N(n)} | f(t^n_k) - f(t^n_{k-1})| .
\]
The fact that the sequence is nested means that the limit exists.

The total variation of $f$ is $\sV^{\cbbT}(f) = \sup \sV(\sP^{\cbbT}, f)$ where the supremum is taken over dense sequences of nested
partitions. Note that $\sV^{\cbbT}(f) \geq \sV(\sP^{\cbbT}_U,f)$.
\end{defn}

\begin{defn}
Let $Z = (Z_t)_{t \in \cbbT}$ be a stochastic process. Define the expected
total variation $V^{\cbbT}(Z)$ of $Z$ by
$V^{\cbbT}(Z) = \E[\sV^{\cbbT}(Z)] = \E[\sup \sV(P^{\cbbT},Z)]$, where the supremum is taken over partitions which may
depend upon the realisation of $Z$.

For the sequence of uniform partitions $\sP^{\cbbT}_U$ define the expected total variation along the uniform
partition $V(\sP^{\cbbT}_U,Z)$ by
$V(\sP^{\cbbT}_U,Z) = \E[ \lim_n \sV( P^{\cbbT}_{U,n}, Z)]$.
By the final remark in the
previous definition, $V^{\cbbT}(Z) \geq V(\sP^{\bbT}_U,Z)$.
\end{defn}

%{\bf Comments on right continuity. I don't think i need any}

Suppose $\epsilon > 0$ and $\bbT = (\epsilon,T)$.

\begin{thm}
\label{thm:bound}
Suppose Assumptions~\ref{ass:1}, \ref{ass:2} and \ref{ass:dispersion} hold.
Let $Y$ be any martingale with marginals $(\mu_t)_{t \in \cbbT}$. Then,
%for $0 <\epsilon<T<\infty$
\begin{equation}
\label{eq:inequality}
V^{\cbbT}(Y) \geq V( \sP_U^{\cbbT}; Y) \geq \int_{\bbT} dt \int dx \psi_t(x) \dot{\rho}(t,x)  = \int_{\bbT} dt \int \psi_t(x) q_t(dx).
\end{equation}
\end{thm}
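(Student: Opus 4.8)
The plan is to prove the chain of inequalities in (\ref{eq:inequality}) from right to left. The first inequality $V^{\cbbT}(Y) \geq V(\sP_U^{\cbbT}; Y)$ is immediate from the final remark in the definition of expected total variation, since the uniform partitions form one particular dense sequence of nested partitions. The final equality $\int_{\bbT} dt \int \psi_t(x) \dot{\rho}(t,x)\,dx = \int_{\bbT} dt \int \psi_t(x) q_t(dx)$ follows from the regular-case identity relating $\dot\rho$ and $q_t$ established in the remark after Definition~\ref{def:regular}, together with the fact that $\psi_t$ is bounded by $|x - x_0|$ (Proposition~\ref{prop:hk}) and the finite-activity hypothesis Assumption~\ref{ass:dispersion}(d), which make the integrals absolutely convergent and allow us to pass between the density and signed-measure formulations. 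So the real content is the middle inequality $V(\sP_U^{\cbbT}; Y) \geq \int_{\bbT} dt \int \psi_t(x) \dot\rho(t,x)\,dx$.

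For that, the key tool is Corollary~\ref{cor:hk}: for any $L^1$ random variables with $\E[(V - U)\mid U] = 0$ we have $\E[\psi_t(V)] - \E[\psi_t(U)] \leq \E[|V - U|]$. First I would fix a uniform partition $P_{U,n}^{\cbbT} = \{t_0, \ldots, t_{2^n}\}$ and apply this with $U = Y_{t_{k-1}}$, $V = Y_{t_k}$, and $\psi = \psi_{t_k}$ (or $\psi_{t_{k-1}}$ — a choice to be pinned down), using the martingale property of $Y$. Summing over $k$ gives
\[ \E\left[\sum_{k=1}^{2^n} |Y_{t_k} - Y_{t_{k-1}}|\right] \geq \sum_{k=1}^{2^n} \left( \E[\psi_{t_k}(Y_{t_k})] - \E[\psi_{t_k}(Y_{t_{k-1}})]\right). \]
Now the left side converges to $V(\sP_U^{\cbbT}; Y)$ as $n \to \infty$ (monotone limit along nested partitions, plus dominated/monotone convergence for the expectation). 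On the right side, since $Y_t \sim \mu_t$, we have $\E[\psi_{t_k}(Y_{t_k})] = \int \psi_{t_k}(y) \mu_{t_k}(dy)$, so the sum telescopes up to the discrepancy between consecutive $\psi_{t_k}$ and $\psi_{t_{k-1}}$; writing $\Psi(t) := \int \psi_t(y)\mu_t(dy)$ the right-hand side should be recognizable as a Riemann-sum approximation to $\int_{\bbT} \frac{d}{dt}\Psi(t)\,dt$ plus correction terms controlled by the continuity of $\psi_t$ in $t$ (the Lemma in Section 4.3) and of $t \mapsto \mu_t$. Finally I would identify $\frac{d}{dt}\int \psi_t(y)\mu_t(dy)$: the contribution from $\mu_t$ varying is $\int \psi_t(y) q_t(dy) = \int \psi_t(y)\dot\rho(t,y)\,dy$ by Assumption~\ref{ass:2}(e), and the contribution from $\psi_t$ varying in $t$ must be shown to vanish or to be accounted for — this is where the precise choice of which $\psi$ to insert at each step matters, and getting the bookkeeping right is the crux.

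The main obstacle I expect is precisely this last point: controlling the $t$-dependence of $\psi_t$ so that the telescoping sum converges to $\int_{\bbT}\int \psi_t(x)\dot\rho(t,x)\,dx\,dt$ and not to something with an extra $\int \dot\psi_t(x)\mu_t(dx)$ term. One clean way to handle it is to use the $L_t$ function and Proposition~\ref{prop:hk}: since $L_{t}(x,y) = |y-x| + \psi_t(x) + \theta_t(x)(y-x) - \psi_t(y) \geq 0$, applying this with $x = Y_{t_{k-1}}$, $y = Y_{t_k}$, taking expectations, using $\E[(Y_{t_k} - Y_{t_{k-1}})\mid Y_{t_{k-1}}] = 0$ to kill the $\theta_t$ term, and choosing $t = t_{k-1}$ (so that $\psi_{t_{k-1}}(Y_{t_{k-1}})$ has law $\mu_{t_{k-1}}$) yields $\E[|Y_{t_k} - Y_{t_{k-1}}|] \geq \E[\psi_{t_{k-1}}(Y_{t_k})] - \int \psi_{t_{k-1}}\,d\mu_{t_{k-1}}$. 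One then compares $\E[\psi_{t_{k-1}}(Y_{t_k})]$ with $\int \psi_{t_{k-1}}\,d\mu_{t_k}$ (equal, since $Y_{t_k}\sim\mu_{t_k}$) and estimates $\int \psi_{t_{k-1}}\,d(\mu_{t_k} - \mu_{t_{k-1}}) = \int_{t_{k-1}}^{t_k}\int\psi_{t_{k-1}}(x)q_s(dx)\,ds$ using Assumption~\ref{ass:2}(e); continuity of $\psi$ in $t$ and the finite-activity bound then let the Riemann sum converge to $\int_{\bbT}\int\psi_t(x)q_t(dx)\,dt$. The domination needed to justify all the limit interchanges comes from $|\psi_t(x)| \leq |x - x_0|$ combined with uniform integrability of $\{Y_t\}$ (which holds since $Y$ is an $L^1$-bounded martingale with fixed $L^1$-convergent marginals) and Assumption~\ref{ass:dispersion}(d).
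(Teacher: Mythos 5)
Your proposal is correct and, in its final form (using $L_{t_{k-1}}\geq 0$ with the martingale property, i.e.\ Corollary~\ref{cor:hk} applied with $\psi_{t_{k-1}}$ at the left endpoint, then converting expectations to integrals against $\mu_{t_{k-1}},\mu_{t_k}$, rewriting the increment via Assumption~\ref{ass:2}(e), and passing to the limit by continuity of $\psi$ in $t$, the bound $\psi_t(x)\leq|x-x_0|$ and the finite-activity integrability with dominated convergence), it is essentially identical to the paper's proof. The only detour is your initial consideration of $\psi_{t_k}$ at the right endpoint, which you correctly discard in favour of the left-endpoint choice the paper uses from the start.
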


\begin{proof} We only need to prove the second inequality.
Let $[t]_n = \max \{ t_k \in P^{\cbbT}_{U,n} ; t_k \leq t \}$. Then $[t]_n \uparrow t$ and
by Corollary~\ref{cor:hk}, for $s<t$ we have,
$\E[|Y_t - Y_s|] \geq \E[ \psi(s,Y_t) - \psi(s, Y_s) ]$,
%{\bf Do we use a Fubini in the second line?} {\em Don't see why}
\begin{eqnarray*}
V(\sP^{\cbbT}_{U,n},Y) & = & \E \left[ \sum_{k=1}^{2^n} |Y_{t^{U,n}_k} - Y_{t^{U,n}_{k-1}}|   \right] \\
& \geq & \sum_{k=1}^{2^n} \E \left[ \left( \psi(t^{U,n}_{k-1} , Y_{t^{U,n}_k}) - \psi({t^{U,n}_{k-1}} , Y_{t^{U,n}_{k-1}}) \right)  \right] \\
& = & \sum_{k=1}^{2^n} \int_{I}  dx \; \psi(t^{U,n}_{k-1}, x) \left[ \rho(t^{U,n}_{k},x) - \rho(t^{U,n}_{k-1},x) \right] \\
%& = & \sum_{k=1}^{N(n)} \int  dx \; \psi_{t_{k-1}}(x) \left[ \rho(t_{k},x) - \rho(t_{k-1},x) \right] \\
& = &  \int dx \int_{\bbT} \psi([t]_n,x) dt
\dot{\rho}(t,x) dt \\
& \longrightarrow & \int dx \int_\epsilon^T dt \psi(t,x) \dot{\rho}(t,x) dt
\end{eqnarray*}
where we use the fact that $\psi([t]_{n},x) \rightarrow \psi(t,x)$, $\psi(t,x) \leq |x-x_0|$,
$\int_{\bbT} \int (|x| + 1) |\dot{\rho}(t,x)| dx < \infty$ and dominated convergence.

\end{proof}

It remains to show that this bound is best possible, or equivalently for the process constructed
in this section there is equality in (\ref{eq:inequality}). This
follows from the fact that $L_t(x,y)\equiv 0$ for $y \in \{a(t,x),x,b(t,x)\}$.

\begin{thm}
\label{thm:attain}
Let $X$ be the process constructed in Section~\ref{sec:candidate} using the family of martingale transports given in
(\ref{eq:pidef}). Then
\begin{equation}
\label{eq:attain1}
 V^{\cbbT}(X) = \int_{\bbT} dt \int dx \psi(t,x)
\dot{\rho}(t,x).
\end{equation}
Hence in the class of martingales with marginals $(\mu_t)_{t \in \cbbT}$, $X$ minimises the expected total variation.
\end{thm}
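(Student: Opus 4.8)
The plan is to show that for the explicit process $X$ constructed in Section~\ref{sec:candidate} from the binomial transports~(\ref{eq:pidef}), the expected total variation along \emph{any} sequence of partitions converges to the lower bound of Theorem~\ref{thm:bound}, and that this in fact equals $V^{\cbbT}(X)$ computed along a path-dependent (optimal) partition. Since Theorem~\ref{thm:bound} already gives $V^{\cbbT}(X) \geq \int_\bbT dt \int dx\, \psi(t,x)\dot\rho(t,x)$, it suffices to prove the reverse inequality $V^{\cbbT}(X) \leq \int_\bbT dt \int dx\, \psi(t,x)\dot\rho(t,x)$.

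First I would exploit the pure-jump structure of $X$: almost surely $X$ has finitely many jumps on $\cbbT$, so its total variation is simply the sum of the absolute jump sizes, $\sV^{\cbbT}(X) = \sum_{\text{jumps at }s} |X_s - X_{s-}|$, and this supremum over partitions is attained (in the limit) by any sequence of nested partitions whose mesh tends to zero, since such partitions eventually separate all the (finitely many) jump times. Hence $V^{\cbbT}(X) = \E\big[\sum_s |\Delta X_s|\big]$. Next I would compute this expectation using the compensator of the jump measure of $X$: conditional on $X_{s}=x$, jumps occur at rate $R_s(x) = \dot\rho(s,x)^-/\rho(s,x)$ and the post-jump law is $\pi^x_s = \frac{b_s(x)-x}{b_s(x)-a_s(x)}\delta_{a_s(x)} + \frac{x-a_s(x)}{b_s(x)-a_s(x)}\delta_{b_s(x)}$, so
\[
\E\Big[\sum_s |\Delta X_s|\Big] = \int_\bbT ds \int \mu_s(dx)\, R_s(x)\!\int \pi^x_s(dy)\,|y-x| = \int_\bbT ds \int \gamma_s(dx) \int \pi^x_s(dy)\,|y-x|.
\]
Now the key identity is Proposition~\ref{prop:hk}: $L_s(x,y) = |y-x| + \psi_s(x) + \theta_s(x)(y-x) - \psi_s(y) \equiv 0$ for $y \in \{a_s(x), x, b_s(x)\}$, and $\pi^x_s$ is supported on $\{a_s(x),b_s(x)\}$ with $\int \pi^x_s(dy)(y-x)=0$ by the martingale property. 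Therefore $\int \pi^x_s(dy)\,|y-x| = \int \pi^x_s(dy)\,[\psi_s(y) - \psi_s(x)]$, and integrating against $\gamma_s(dx)$ and using $\pi_s \in \sM(\gamma_s,\lambda_s)$ (the Corollary in Section~\ref{ssec:constructionfixedt}),
\[
\int \gamma_s(dx)\int \pi^x_s(dy)\,|y-x| = \int \lambda_s(dy)\,\psi_s(y) - \int \gamma_s(dx)\,\psi_s(x) = \int \psi_s(x)\,q_s(dx) = \int dx\, \psi(s,x)\,\dot\rho(s,x),
\]
using $q_s(dx) = \lambda_s(dx) - \gamma_s(dx)$ and the regular-case identity $q_s(dx) = \dot\rho(s,x)\,dx$. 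Integrating over $s \in \bbT$ gives~(\ref{eq:attain1}), and combined with Theorem~\ref{thm:bound} this yields equality there and hence optimality of $X$ within the class of consistent martingales.

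The main obstacle is the first step --- making rigorous the claim $V^{\cbbT}(X) = \E[\sum_s |\Delta X_s|]$ and, in particular, that taking the supremum over \emph{path-dependent} partitions does not exceed the jump sum. One must check that the total variation of a right-continuous pure-jump path with finitely many jumps is exactly the sum of jump sizes (elementary) and that the finiteness of the number of jumps holds almost surely on $\cbbT$ --- this follows from $R_s(x) \leq \overline K$ on $\bbT = (\epsilon,T)$ as already used in the proof of Theorem~\ref{thm:mimic}. A secondary technical point is justifying the compensation formula for $\E[\sum_s|\Delta X_s|]$ and the interchange of integration; this is legitimate because the integrand $\int\pi^x_s(dy)|y-x| \leq |b_s(x) - a_s(x)|$ is controlled, and the finite-activity Assumption~\ref{ass:dispersion}(d) together with $\psi_s(x) \leq |x-x_0|$ gives the required integrability $\int_\bbT ds \int (|x|+1)|\dot\rho(s,x)|\,dx < \infty$ to apply Fubini. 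Once these are in place the computation is the routine chain of identities above.
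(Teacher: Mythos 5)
Your proposal is correct and follows essentially the same route as the paper: both identify $V^{\cbbT}(X)$ with the expected sum of jump sizes (finitely many jumps, bounded rate $R_t(x)\leq \overline{K}$), compute this expectation through the jump rate and the binomial kernel $\pi^x_t$ of (\ref{eq:pidef}), and use the vanishing of $L_t(x,y)$ at $y\in\{a_t(x),x,b_t(x)\}$ (Proposition~\ref{prop:hk}) to identify the result with $\int_{\bbT}dt\int \psi(t,x)\dot{\rho}(t,x)dx$, i.e.\ the lower bound of Theorem~\ref{thm:bound}. The only divergence is organizational: where the paper verifies the key identity (\ref{eq:3b}) by the explicit change of variables coming from Proposition~\ref{prop:differentiable}, you integrate the pointwise identity $|y-x|=\psi_t(y)-\psi_t(x)-\theta_t(x)(y-x)$ against $\pi_t(dx,dy)$ and invoke the already-established marginal and martingale properties of $\pi_t\in\sM(\gamma_t,\lambda_t)$ to eliminate the $\theta$-term and produce $\int\psi_t\,dq_t$ — a slightly cleaner packaging of the same computation.
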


\begin{proof}
Firstly note that from easy properties about zero mean random variables
taking only two values
\begin{eqnarray*}
 \E\left[ \left. | X_{t+dt}-X_t | \right| X_t=x \right] & = &
 \mbox{ Rate of jumps}
\times dt \times \E\left[ \left. | \Delta X_t | \right| \mbox{Jump at
$t$}, \; X_t=x \right] \\
& = & R_t(x) \frac{2(b(t,x)-x)(x-a(t,x))}{b(t,x)-a(t,x)} dt
\end{eqnarray*}

Recall that by construction $X$ is right-continuous, and almost surely has only finitely many jumps
in $\cbbT = [\epsilon,T]$. We may assume that the jump times are elements of the partition and then
\begin{eqnarray*}
V^{\cbbT}(X) = \E \left[ \sum_{ \epsilon < t \leq T} | \Delta X_t| \right]
%& = & \int_\epsilon^T dt \int_{E_t} \rho(t,x) dx \E[\left[ \left. | X_{t+dt}-X_t | \right| X_t=x \right]  \nonumber \\
& = & \int_\epsilon^T dt \int_{E_t} \rho(t,x) R_t(x) \frac{2(b(t,x)-x)(x-a(t,x))}{b(t,x)-a(t,x)} dx  \\%\nonumber \\
& = &  \int_\epsilon^T dt \int_{E_t} 2\dot{\rho}(t,x)^- \frac{(b(t,x)-x)(x-a(t,x))}{b(t,x)-a(t,x)} dx . %\label{eq:sumjumps}
\end{eqnarray*}

The result will follow if we can show that
\begin{equation}
\label{eq:3b}
\int dx \psi_t(x) \dot{\rho}(t,x)
 =   2 \int_{E_t} \dot{\rho}(t,x)^- \frac{(b(t,x)-x)(x - a(t,x))}{b(t,x)-a(t,x)} dx . %\\
%& = & \int_{E_t} \rho(t,x) \E[ |dX_t| |{\mbox{Jump at $t$, $X_t=x$}} ] dx \\
% =  \int_{E_t} \rho(t,x) (\E [ |X_{t+dt} - X_t| | X_t=x]/dt) dx
\end{equation}

%Write $\Psi(t,x)=\psi(t,x)$ restricted to $x \in E_t^c$ and let
%$\Upsilon(t,x)=\psi(t,x)$ on $E_t$. Let $\Theta(t,x) = \theta_{t}(x)$ for $x \in E_t$.

We have
\begin{equation}
\label{eq:attain} \int dx \psi_t(x) \dot{\rho}(t,x) dx =
 -   \int_{E_t}  \psi(t,x) \dot{\rho}(t,x)^- dx
 + \int_{E^c_t}  \psi(t,y) \dot{\rho}(t,y) dx .
\end{equation}
Then, from the fact that $L_t(x,y) = 0$ for $y \in \{ a_t(x), x, b_t(x) \}$, for $y = a(t,x)$ we have
$\psi_t(y) = \psi(t,y) = \psi(t,x) + (x-y)(1- \theta(t,x))$ and for
$y = b(t,x)$ we have
$\psi_t(y) = \psi(t,y) = \psi(t,x) + (x-y)(-1- \theta(t,x))$. Then, by Proposition~\ref{prop:differentiable},
\begin{eqnarray*}
\int_{y \leq \ell_\gamma(t)} dy \psi(t,y) \dot{\rho}(t,y)
& = & - \int_{E_t} dx a'(t,x) \psi(t,a(t,x)) \dot{\rho}(t,a(t,x)) \\
& = &  \int_{E_t} dx \left[
\psi(t,x) +(x-a(t,x)) - (x -
a(t,x))\theta(t,x) \right] \frac{b(t,x)-x}{b(t,x)-a(t,x)}
\dot{\rho}(t,x)^- .
\end{eqnarray*}
Similarly,
\[ \int_{y \geq r_\gamma(t)} dy \psi(t,y) \dot{\rho}(t,y) =
 \int_{E_t} dx \left[
\psi(t,x)- (x-b(t,x)) - (x -
b(t,x))\theta(t,x) \right] \frac{x-a(t,x)}{b(t,x)-a(t,x)}
\dot{\rho}(t,x)^- .  \]
Substituting these last two expressions into (\ref{eq:attain}) we find that the
terms involving $\psi(t,x)$ and $\theta(t,x)$ cancel and (\ref{eq:3b}) follows.
\end{proof}

\begin{rem}{\rm
Let $H_t = \int \psi(t,x) \rho(t,x) dx$. Suppose that
that $\int_{t \in \bbT} dt  \int_x dx  |\dot{\psi}(t,x)| {\rho}(t,x) < \infty$
and note that Assumption~\ref{ass:dispersion}(d) implies $\int_{t \in \bbT} dt  \int_x dx  |\psi(t,x)| | \dot{\rho}(t,x)| < \infty$. Then
\[ H_T = H_\epsilon + \int_\epsilon^T \dot{H}_t dt = H_\epsilon + \int_\epsilon^T dt \int \dot{\psi}(t,x) \rho(t,x) dx
+ \int_\epsilon^T dt \int \psi(t,x) \dot{\rho}(t,x) dx ,\]
and hence if $Y= (Y_t)_{t \in \cbbT}$ is a martingale with marginals $(\mu_t)_{t \in \cbbT}$ then
\begin{equation}
\label{eq:inequality2}
 V^{\cbbT}(Y) \geq \int \psi_T(x)\rho(T,x) dx  - \int \psi_\epsilon(x) \rho(\epsilon,x) dx - \int_\epsilon^T dt \int \dot{\psi}(t,x) \rho(t,x) dx.
\end{equation}
}
\end{rem}

Now consider the case where $\epsilon=0$ and $\bbT=(0,T)$. Results in this case follow on taking the limit $\epsilon \downarrow 0$.
\begin{cor} Suppose $\int_0^T ( |x|\vee 1) \xi_t(dx) < \infty$. Let $Y= (Y_t)_{t \in [0,T]}$ be any martingale with marginals $(\mu_t)_{t \in [0,T]}$,
and let $X$ be the martingale constructed using the martingale transports in (\ref{eq:pidef}). Then
$V^{[0,T]}(Y) \geq V^{[0,T]}(X) = \int_0^T dt \int dx \: \psi(t,x) \dot{\rho}(t,x)$ and in the class of martingales
with marginals $(\mu_t)_{t \in [0,T]}$, $X$ has minimal expected total variation.

Suppose further that $\int \psi_\epsilon(x) \rho(\epsilon,x) dx \stackrel{\epsilon \downarrow 0}\rightarrow 0$ and
$\int_{0}^T dt  \int_x dx  |\dot{\psi}(t,x)|{\rho}(t,x) < \infty$. Then
\begin{equation}
\label{eq:0T}
V^{[0,T]}(Y) \geq V^{[0,T]}(X) = \int_0^T dt \int \psi(t,x) q_t(dx) =
\int \psi_T(x)\mu_T(dx)  - \int_0^T dt \int \dot{\psi}(t,x) \mu_t(dx) .
\end{equation}
\end{cor}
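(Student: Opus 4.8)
The plan is to derive both claims by letting $\epsilon \downarrow 0$ in Theorems~\ref{thm:bound} and~\ref{thm:attain} and in the Remark following Theorem~\ref{thm:attain}, combining the monotonicity of total variation in the time interval with a dominated/monotone convergence argument.

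First I would record the elementary fact that for any function $f$ and any $0<\epsilon<T$ we have $\sV^{[0,T]}(f) \geq \sV^{[\epsilon,T]}(f)$ (adjoin the point $0$ to any nested sequence of partitions of $[\epsilon,T]$), and hence $V^{[0,T]}(Z) \geq V^{[\epsilon,T]}(Z)$ for every process $Z$ whose marginals are defined on $[0,T]$. Applying Theorem~\ref{thm:bound} on $[\epsilon,T]$ to the restriction of $Y$ gives, for every $\epsilon>0$,
\[ V^{[0,T]}(Y) \geq V^{[\epsilon,T]}(Y) \geq \int_\epsilon^T dt \int \psi_t(x) q_t(dx) . \]
Since the integrand is dominated exactly as in the proof of Theorem~\ref{thm:bound} (using $\psi_t(x)\leq|x-x_0|$ from Proposition~\ref{prop:hk} together with the hypothesis $\int_0^T(|x|\vee 1)\xi_t(dx)<\infty$), dominated convergence gives $\int_\epsilon^T dt \int \psi_t(x) q_t(dx) \to \int_0^T dt \int \psi_t(x) q_t(dx) = \int_0^T dt \int \psi(t,x)\dot{\rho}(t,x)\,dx$ as $\epsilon\downarrow 0$, which proves the lower bound for $Y$. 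For the constructed $X$, Theorem~\ref{thm:attain} gives $V^{[\epsilon,T]}(X) = \int_\epsilon^T dt \int \psi_t(x) q_t(dx)$ for each $\epsilon>0$; pathwise $X$ is piecewise constant on $(0,T]$, and the finiteness of the limit just computed shows $\sum_{0<t\leq T}|\Delta X_t|<\infty$ a.s.\ and that $X_0=X_{0+}$ a.s.\ (in the $K$-bounded case $X$ has finitely many jumps on $(0,T]$; in the case $\mu_t\to\delta_{\overline{\mu}}$, $X_{0+}$ exists a.s.\ by summability of the jumps near $0$ and equals $\overline{\mu}=X_0$). Hence $\sV^{[0,T]}(X)=\sum_{0<t\leq T}|\Delta X_t|$ is the increasing limit of $\sV^{[\epsilon,T]}(X)=\sum_{\epsilon<t\leq T}|\Delta X_t|$, and monotone convergence gives $V^{[0,T]}(X)=\lim_{\epsilon\downarrow 0}V^{[\epsilon,T]}(X)=\int_0^T dt\int\psi(t,x)\dot{\rho}(t,x)\,dx$. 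Comparing with the $Y$-bound proves the first assertion, including minimality of $X$.

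For the second assertion I would pass to the limit in the identity underlying the Remark following Theorem~\ref{thm:attain}. Writing $H_t=\int\psi(t,x)\rho(t,x)\,dx$, that argument yields, for each $\epsilon>0$, the identity $\int_\epsilon^T dt\int\psi(t,x)\dot{\rho}(t,x)\,dx = H_T - H_\epsilon - \int_\epsilon^T dt\int\dot{\psi}(t,x)\rho(t,x)\,dx$, together with the lower bound~(\ref{eq:inequality2}) for any martingale with the prescribed marginals. Under the extra hypotheses, $H_\epsilon=\int\psi_\epsilon(x)\rho(\epsilon,x)\,dx\to 0$ and, since $\int_0^T dt\int|\dot{\psi}(t,x)|\rho(t,x)\,dx<\infty$, dominated convergence gives $\int_\epsilon^T dt\int\dot{\psi}(t,x)\rho(t,x)\,dx\to\int_0^T dt\int\dot{\psi}(t,x)\rho(t,x)\,dx$. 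Letting $\epsilon\downarrow 0$ in the displayed identity and using the first assertion gives $V^{[0,T]}(X)=\int_0^T dt\int\psi(t,x)q_t(dx)=H_T-\int_0^T dt\int\dot{\psi}(t,x)\rho(t,x)\,dx=\int\psi_T(x)\mu_T(dx)-\int_0^T dt\int\dot{\psi}(t,x)\mu_t(dx)$; letting $\epsilon\downarrow 0$ in~(\ref{eq:inequality2}), with $V^{[0,T]}(Y)\geq V^{[\epsilon,T]}(Y)$, gives the matching lower bound for $Y$.

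I expect the genuine difficulty to lie not in the $Y$-side estimates, which are routine given $\psi_t(x)\leq|x-x_0|$ and the quoted integrability, but in justifying $V^{[\epsilon,T]}(X)\to V^{[0,T]}(X)$ for the constructed process. One must invoke the construction of Section~\ref{sec:extension} to know that $X$ is genuinely well-defined on all of $[0,T]$ with marginals $(\mu_t)_{t\in[0,T]}$, and one must verify that the (possibly infinitely many) jumps of $X$ accumulating at $t=0$ are almost surely summable, so that $\sV^{[0,T]}(X)<\infty$ and equals $\lim_{\epsilon\downarrow 0}\sV^{[\epsilon,T]}(X)$; this is precisely what the moment hypothesis $\int_0^T(|x|\vee 1)\xi_t(dx)<\infty$ provides.
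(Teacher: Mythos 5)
Your proposal is correct and follows essentially the paper's own route: the paper offers no further argument beyond ``results in this case follow on taking the limit $\epsilon \downarrow 0$'' in Theorems~\ref{thm:bound} and~\ref{thm:attain} and the remark containing (\ref{eq:inequality2}), which is precisely what you carry out. Your filled-in details (monotonicity of $V$ in the time interval, domination via $\psi_t(x) \leq |x-x_0|$ and the moment hypothesis, and summability of the jumps of $X$ accumulating at $t=0$ so that $V^{[\epsilon,T]}(X) \uparrow V^{[0,T]}(X)$) are exactly the intended justification.
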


\subsection{The dual problem and a pathwise representation}
The inequalities (\ref{eq:inequality}) and (\ref{eq:inequality2}) are lower bounds on expected total variation. It is well known from the literature on
model-free option pricing (see, for example, Hobson~\cite{Hobson:98,Hobson:11} and Beiglb\"ock and Nutz~\cite{BeiglbockNutz:13}) that such bounds are
often related to pathwise martingale inequalities. Again the key to this result is the inequality $L_t(x,y) \geq 0$.

Note that in the following theorem we do not make any assumption that $Z$ is a martingale,
or that $Z$ has the correct marginals.

\begin{thm}
\label{thm:dual} Suppose $\bbT = (\epsilon,T)$ with $\epsilon > 0$.
Suppose $\dot{\psi}(t,x)$ and $\theta(t,x)$ are continuous and bounded on $\bbT \times I$ and suppose $L_{t}(x,y) = L_{\psi_t, \theta_t}(x,y) \geq 0$ for all
$(x,y) \in I \times I$ and all $t \in \bbT$.
Then for all paths $Z(\omega) = (Z_{u}(\omega))_{u \in \cbbT}$
\begin{equation}
\label{eq:dual} \sV^{\cbbT}(Z(\omega)) %\int_\epsilon^T | dX_u |
\geq \psi(T,Z_T(\omega)) - \psi(\epsilon,Z_\epsilon(\omega)) - \int_\epsilon^T \dot{\psi}(u,Z_u(\omega)) du
- \int_\epsilon^T \theta(u,Z_u(\omega)) dZ_u(\omega) .
\end{equation}
\end{thm}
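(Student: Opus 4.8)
\textbf{Proof proposal for Theorem~\ref{thm:dual}.}

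The plan is to reduce everything to the single pointwise inequality $L_t(x,y)\ge 0$ of Proposition~\ref{prop:hk} plus a telescoping argument. First I would dispose of the trivial case: if $\sV^{\cbbT}(Z(\omega))=\infty$ the left-hand side of (\ref{eq:dual}) is $+\infty$ and there is nothing to prove, so I may assume that the fixed path $Z=Z(\omega)$ has finite total variation on $\cbbT$. Then $Z$ is a difference of two monotone functions, so it has left and right limits everywhere and is continuous off a countable set; moreover all the terms on the right of (\ref{eq:dual}) are well defined, with $\int_\epsilon^T\theta(u,Z_u)\,dZ_u$ read as the limit of left-point Riemann--Stieltjes sums (the forward-integral convention, consistent with its interpretation as the gain from a self-financing strategy and with the way the same object appears for martingales in (\ref{eq:inequality2})).

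Next, fix a dense nested sequence of partitions $\sP^{\cbbT}=(P_n)_{n\ge1}$, say the uniform dyadic one, write $P_n=\{t_0=\epsilon<t_1<\dots<t_{N}=T\}$ (suppressing $n$), and telescope:
\[ \psi(T,Z_T)-\psi(\epsilon,Z_\epsilon)=\sum_{k=1}^{N}\Big(\big[\psi(t_k,Z_{t_k})-\psi(t_{k-1},Z_{t_k})\big]+\big[\psi(t_{k-1},Z_{t_k})-\psi(t_{k-1},Z_{t_{k-1}})\big]\Big). \]
The first bracket equals exactly $\int_{t_{k-1}}^{t_k}\dot\psi(u,Z_{t_k})\,du$ (fundamental theorem of calculus in the $t$-variable, with $Z_{t_k}$ held fixed), and the second bracket is controlled by $L_{t_{k-1}}(Z_{t_{k-1}},Z_{t_k})\ge0$, which rearranges to
\[ \psi(t_{k-1},Z_{t_k})-\psi(t_{k-1},Z_{t_{k-1}})\le |Z_{t_k}-Z_{t_{k-1}}|+\theta(t_{k-1},Z_{t_{k-1}})(Z_{t_k}-Z_{t_{k-1}}). \]
Since $\bbT$ is open I would remark that, by continuity of $\psi_t$ and $\theta_t$ in $t$, $L_t\ge0$ persists at the left endpoint $t=\epsilon$, so the case $k=1$ is covered. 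Summing over $k$ and rearranging yields, for every $n$,
\[ \sV(P_n,Z)=\sum_{k=1}^{N(n)}|Z_{t_k}-Z_{t_{k-1}}|\ \ge\ \psi(T,Z_T)-\psi(\epsilon,Z_\epsilon)-\sum_{k}\int_{t_{k-1}}^{t_k}\dot\psi(u,Z_{t_k})\,du-\sum_{k}\theta(t_{k-1},Z_{t_{k-1}})(Z_{t_k}-Z_{t_{k-1}}). \]

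Finally I would let $n\to\infty$. Along a nested sequence $\sV(P_n,Z)$ increases to $\sV(\sP^{\cbbT},Z)\le\sV^{\cbbT}(Z)$. The first sum on the right equals $\int_\epsilon^T\dot\psi\big(u,Z_{[u]_n^+}\big)\,du$, where $[u]_n^+$ is the first partition point $\ge u$; since $[u]_n^+\downarrow u$, $Z$ has right limits off a null set, and $\dot\psi$ is continuous and bounded, dominated convergence gives the limit $\int_\epsilon^T\dot\psi(u,Z_u)\,du$. The last sum is a left-point Riemann--Stieltjes sum for $\int_\epsilon^T\theta(u,Z_u)\,dZ_u$, and it converges because $u\mapsto\theta(u,Z_u)$ is bounded with at most countably many discontinuities while $Z$ has finite variation. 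Passing to the limit in the displayed inequality gives (\ref{eq:dual}). The main (and essentially the only) delicate point is this last step: for an irregular path $Z$, left- and mid-point Stieltjes sums disagree at jump times, so one must commit to the forward-integral reading of $\int\theta\,dZ$ and check its stability; everything else is routine telescoping once $L_t\ge0$ is available.
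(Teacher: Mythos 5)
Your proposal is correct and follows essentially the same route as the paper's proof: reduce to the finite-variation case, apply the pointwise inequality $L_{t}(Z_{t_{k-1}},Z_{t_k})\geq 0$ along the uniform dyadic partitions, telescope the $\psi$ terms, and pass to the limit using monotonicity for the variation sum and bounded convergence for the $\dot{\psi}$ and $\theta$ terms. Your extra remarks (handling $t_0=\epsilon$ and committing to the left-point/forward reading of $\int\theta(u,Z_u)\,dZ_u$) only make explicit points the paper treats implicitly.
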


\begin{proof}
Note that if the path is not of finite variation then the inequality is trivially satisfied.
So fix $\omega$ and suppose $Z= (Z_t(\omega))_{t \in \cbbT}$
is of finite variation.
We have $L_{\psi_t, \theta_t}(Z_t,Z_{t+h}) \geq 0$ and hence
\begin{equation}
\label{eq:Zsum} \psi(t, Z_{t+h}) - \psi(t,Z_t) - \theta(t,Z_t) (Z_{t+h} -Z_t) \leq |Z_{t+h} -Z_t|.
\end{equation}
Consider the uniform partition $P^{\bbT}_{U,n} = \{ t^{U,n}_k ; 0 \leq k \leq 2^n \}$ where $t^{U,n}_k = \epsilon + k 2^{-n} (T - \epsilon)$
and abbreviate $t^{U,n}_k$ to $t^n_k$. Then, applying (\ref{eq:Zsum}) repeatedly,
\begin{eqnarray*}
\sum_{k=1}^{2^n} \left[ \psi(t^{n}_{k-1}, Z_{t^{n}_{k}}) - \psi(t^{n}_k,Z_{t^{n}_k}) \right]
  + \sum_{k=1}^{2^n} \left[ \psi(t^n_{k},Z_{t^n_{k}}) - \psi(t^n_{k-1} ,Z_{t^n_{k-1}}) \right] \! \!
  & \! - \! & \! \! \sum_{k=1}^{2^n} \theta(t^n_{k-1} ,Z_{t^n_{k-1}}) (Z_{t^n_{k}} -Z_{t^n_{k-1}})  \\
  & \leq & \sum_{k=1}^{2^n} |Z_{t^n_{k}} -Z_{t^n_{k-1}}|
\end{eqnarray*}
The second sum on the top line telescopes. By monotonicity, as we take finer and finer
uniform partitions, the term in the
second line converges:
$\sum_{k=1}^{2^n} |Z_{t^n_{k}} -Z_{t^n_{k-1}}| \rightarrow
\sV(\sP^{\cbbT}_U,Z) \leq \sV^{\cbbT}(Z)$.

For the remaining terms, recalling $[t]_n = \max \{ t^k_n \in P^{\cbbT}_n : t^k_n \leq t \}$, and setting
$[t]^n = \min \{ t_k^n \in P^{\cbbT}_n : t_k^n \geq t \}$
\[ \sum_{k=1}^{2^n} \theta(t^n_{k-1} ,Z_{t^n_{k-1}}) (Z_{t^n_{k}} -Z_{t^n_{k-1}})
= \int_\epsilon^T \theta([u]_n,Z_{[u]_n}) dZ_u \rightarrow \int_\epsilon^T \theta(u,Z_u) dZ_u \]
by bounded convergence,
since $\theta$ is continuous and bounded and $Z$ is of finite variation, and
\[ \sum_{k=1}^{2^n} \left( \psi(t^n_{k},Z_{t^n_{k}})
- \psi(t^n_{k-1} ,Z_{t^n_{k}}) \right) = \int_\epsilon^T \dot{\psi}(u,Z_{[u]^n}) du
\rightarrow \int_\epsilon^T \dot{\psi}(u,Z_u) du \]
again by bounded convergence, using the continuity and boundedness of $\dot{\psi}$.
\end{proof}

\begin{cor}\label{cor:dual0}
Suppose $\overline{\mu} \in \cap_{0 \leq t \leq T} E_t$ and $\mu_0 = \delta_{\overline{\mu}}$. If $\dot{\psi}(t,x)$ and $\theta(t,x)$ are continuous and bounded on
$(0,T)\times I$ then if $Z_0(\omega)=\overline{\mu}$
\begin{equation}
\label{eq:dual0}
\sV^{[0,T]}(Z(\omega)) \geq \psi(T, Z_T(\omega)) -  \int_0^T \dot{\psi}(u,Z_u(\omega)) du
- \int_0^T \theta(u,Z_u(\omega)) dZ_u(\omega) .
\end{equation}
\end{cor}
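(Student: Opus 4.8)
The plan is to apply Theorem~\ref{thm:dual} on the truncated interval $[\epsilon,T]$ for $\epsilon>0$ and then let $\epsilon\downarrow 0$, using the variation of $Z$ near the origin to absorb the boundary term $\psi(\epsilon,Z_\epsilon(\omega))$. First, if $Z(\omega)$ is not of finite variation on $[0,T]$ then $\sV^{[0,T]}(Z(\omega))=+\infty$ and the inequality is trivial, so assume $Z(\omega)$ has finite variation. Fix $\epsilon\in(0,T)$. Since $\dot\psi$ and $\theta$ are continuous and bounded on $(0,T)\times I$ they are, in particular, continuous and bounded on $(\epsilon,T)\times I$, and $L_t=L_{\psi_t,\theta_t}\geq 0$ on $I\times I$ for every $t$ by Proposition~\ref{prop:hk}; hence the hypotheses of Theorem~\ref{thm:dual} hold on $(\epsilon,T)$ and
\[ \sV^{[\epsilon,T]}(Z(\omega)) \geq \psi(T,Z_T(\omega)) - \psi(\epsilon,Z_\epsilon(\omega)) - \int_\epsilon^T \dot\psi(u,Z_u(\omega))\,du - \int_\epsilon^T \theta(u,Z_u(\omega))\,dZ_u(\omega). \]

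Next I would adjoin the points $0$ and $\epsilon$ to the partitions used to compute $\sV^{[\epsilon,T]}$ and apply the triangle inequality on the part of each partition lying in $[0,\epsilon]$; this shows $\sV^{[0,T]}(Z(\omega)) \geq |Z_\epsilon(\omega)-Z_0(\omega)| + \sV^{[\epsilon,T]}(Z(\omega)) = |Z_\epsilon(\omega)-\overline\mu| + \sV^{[\epsilon,T]}(Z(\omega))$, using $Z_0(\omega)=\overline\mu$. The key point is that, because $\overline\mu\in\cap_{t}E_t$, we may take the reference point $x_0=\overline\mu$ in the definitions of $\psi_t$ and $\theta_t$, and then Proposition~\ref{prop:hk} gives $\psi(\epsilon,x)=\psi_\epsilon(x)\leq|x-\overline\mu|$ for all $x$. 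Combining the two displays,
\[ \sV^{[0,T]}(Z(\omega)) \geq \left(|Z_\epsilon(\omega)-\overline\mu| - \psi(\epsilon,Z_\epsilon(\omega))\right) + \psi(T,Z_T(\omega)) - \int_\epsilon^T \dot\psi(u,Z_u(\omega))\,du - \int_\epsilon^T \theta(u,Z_u(\omega))\,dZ_u(\omega), \]
and the bracketed term is non-negative, so it may be dropped. Note that no right-continuity of $Z$ at $0$ is invoked.

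Finally I would let $\epsilon\downarrow 0$. Since $\dot\psi$ is bounded, $\int_{(0,\epsilon]}|\dot\psi(u,Z_u(\omega))|\,du\leq\epsilon\,\|\dot\psi\|_\infty\to 0$, so $\int_\epsilon^T\dot\psi(u,Z_u(\omega))\,du\to\int_0^T\dot\psi(u,Z_u(\omega))\,du$; and since $\theta$ is bounded and the total-variation measure $|dZ(\omega)|$ of the finite-variation path is finite on $(0,T]$, we have $\bigl|\int_{(0,\epsilon]}\theta(u,Z_u(\omega))\,dZ_u(\omega)\bigr|\leq\|\theta\|_\infty\,|dZ(\omega)|((0,\epsilon])\to 0$, so $\int_\epsilon^T\theta(u,Z_u(\omega))\,dZ_u(\omega)\to\int_0^T\theta(u,Z_u(\omega))\,dZ_u(\omega)$. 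Passing to the limit in the displayed inequality yields (\ref{eq:dual0}).

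The argument is essentially routine once the truncation at $\epsilon$ and the choice $x_0=\overline\mu$ are made. The only delicate point — which I would flag as the main obstacle — is the bookkeeping at the origin: one must ensure that the contribution $|Z_\epsilon(\omega)-\overline\mu|$ that $\sV^{[0,T]}$ acquires from a possible jump at time $0$ exactly compensates the term $\psi(\epsilon,Z_\epsilon(\omega))$ (this is where $\psi_\epsilon(\cdot)\leq|\cdot-\overline\mu|$ is used), and that the integrals $\int_0^T$ appearing in (\ref{eq:dual0}) are interpreted as the limits of $\int_\epsilon^T$ as $\epsilon\downarrow 0$, which is precisely what the above limiting procedure furnishes.
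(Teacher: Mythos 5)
Your proof is correct, and it fills in exactly the step the paper leaves implicit: the corollary is stated without proof, the intended argument being to apply Theorem~\ref{thm:dual} on $[\epsilon,T]$ and let $\epsilon \downarrow 0$, which is what you do. The one genuinely non-trivial point is the disposal of the boundary term $\psi(\epsilon,Z_\epsilon(\omega))$, and your treatment is the right one: taking $x_0=\overline{\mu}$ (legitimate precisely because of the hypothesis $\overline{\mu}\in\cap_{0\leq t\leq T}E_t$), Proposition~\ref{prop:hk} gives $\psi(\epsilon,Z_\epsilon)\leq |Z_\epsilon-\overline{\mu}|$, and this is absorbed by the extra variation $\sV^{[0,T]}(Z)\geq |Z_\epsilon-Z_0|+\sV^{[\epsilon,T]}(Z)$ coming from $Z_0=\overline{\mu}$; this neatly avoids any appeal to right-continuity of $Z$ at $0$ or to continuity of $\psi$ in $t$ at $t=0$, neither of which is assumed. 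The limiting step for the two integrals is justified as you say (bounded $\dot{\psi}$, and $\theta$ bounded with $|dZ|((0,\epsilon])\to 0$ for a finite-variation path), and your convention that the inequality is trivial for paths of infinite variation matches the convention already used in the proof of Theorem~\ref{thm:dual}.
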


\subsection{Self-similar marginals}
\label{ssec:selfsimilar}
Suppose that in addition to being increasing in convex order, the family $(\mu_t)_{t \in [0,T]}$ is self-similar in the sense that there exists a
centred continuous random variable $Z$ and some positive exponent $\alpha$
such that $\mu_t = \sL(t^\alpha Z)$. Examples include the Gaussian (Example~\ref{eg:bm})
and continuous uniform (Example~\ref{eg:uniform}) specifications. Note that we must have $\mu_0 \sim \delta_{0}$.

Let $\mu_Z$ be the law of $Z$, with potential $U_Z$, and let $I_Z$ with endpoints $\{\ell_Z, r_Z \}$ be the smallest interval such that
the support of $\mu_Z$ is contained in $I_Z$. Let $\rho_Z$ be the associated density and suppose that the product $z \rho_Z(z)$ is continuously differentiable
on $I_Z$ and that $\rho_Z(z) + z \rho'_Z(z)$ is strictly positive on an open interval $E_Z = (\ell_E, r_E)$ with $\ell_E < 0 < r_E$ and negative on
$(\ell_Z, \ell_E) \cup (r_E,r_Z)$. Suppose
further that $\int_{I_Z} (1 + |z|^2) (\rho_Z(z) + |\rho_Z'(z)|) dz < \infty$. Finally, define $\zeta_Z(z) = - \alpha (\rho_Z(z) + z \rho'_Z(z))$ and suppose
$\sup_{\ell_E < z < r_E} \frac{\zeta_Z(z)^-}{\rho_Z(z)} < K_Z$ for some finite constant  $K_Z$.

If $\rho(t,y)$ and $U(t,y)$ are the density and potential of $\mu_t$ then we have $\rho(t,y) = t^{-\alpha} \rho_Z(y t^{-\alpha})$ and
$U(t,y) = t^\alpha U_Z(y t^{-\alpha})$. Further, if we define $P_Z(y) = \frac{1}{2} \alpha \{ U_Z(y) - y U_Z'(y) \}$, then
\[ Q(t,y) : = \frac{1}{2} \dot{U}(t,y) = t^{\alpha-1} P_Z(yt^{-\alpha}) = \frac{1}{2} \alpha t^{\alpha -1} \{ U_Z(y t^{-\alpha}) - y t^{-\alpha} U'_Z(y t^{-\alpha}) \} \]
is positive since $U_Z$ is the potential of a centred random variable, and then, since $P_Z'' = \xi_Z$,
\[ \dot{\rho}(t,y) = Q''(t,y) = t^{-(\alpha+1)} \zeta_Z (y t^{-\alpha}). \]
It follows that we can define $\gamma_t(dy) = \dot{\rho}(t,y)^- dy$ and $\lambda_t(dy) = \dot{\rho}(t,y)^+ dy$, and that
$m(t) = \int \dot{\rho}(t,y)^- dy = \frac{1}{t} \int_{I_Z} \zeta(z)^- dz$ and $n(t) = t^{\alpha - 1} \int_{I_Z} z \zeta_Z(z)^- dz$ are both finite and well defined.

By definition $R_t(y) = \frac{\dot{\rho}(t,y)^-}{\rho(t,y)} = \frac{1}{t} \frac{\zeta_Z(y t^{-\alpha})^-}{\rho_Z(y t^{-\alpha})} < \frac{1}{t} K_Z$.

For $y \in E_Z$ let $A(y) =A_Z(y)$ and $B(y) = B_Z(y)$ with $A(y) < y < B(y)$ attain the supremum in
\[ %\begin{equation} \label{eq:phidefP}
 {\sup}_{a < y < b} \left\{\frac{P_{Z}(y) + (b - y)P'_Z(y) - P_Z(b)   - P_Z(a)}{b - a} \right\} .
\] %\end{equation}
Then, as in Section~\ref{ssec:constructionfixedt} $A : E_Z \mapsto (\ell_Z,\ell_E)$ and $B : E_Z \mapsto (r_E, r_Z)$ are decreasing.

For $y \in \overline{E}_Z$ define
\[ \Theta(y) = \int_{0}^y \frac{2}{B(z)-A(z)} dz; \hspace{10mm} \Psi(y) = \int_{0}^y \frac{2y - B(z) - A(z)}{B(z)-A(z)} dz. \]
Extend these definitions to $y \notin \overline{E}_Z$ by setting
\begin{eqnarray}
\Theta(y) & = & \left\{ \begin{array}{ll} \Theta( A^{-1}(y)); & y < \ell_E \\
                                           \Theta( B^{-1}(y)); \; \; & y > r_E \end{array} \right\}
    \label{eq:Thetadef}
    \\
 \Psi(y) &  = &  \left\{ \begin{array}{ll} \Psi( A^{-1}(y)) + (A^{-1}(y)-y)(1 - \Theta(A^{-1}(y))); & y < \ell_E \\
                                           \Psi( B^{-1}(y)) + (y-B^{-1}(y))(1 + \Theta(B^{-1}(y))); \; \; & y >r_E
                                           \end{array} \right\}
                                           \nonumber
\end{eqnarray}
Finally, define $a_t(y) = t^\alpha A(y t^{-\alpha})$, $b_t(y) = t^\alpha B(yt^{-\alpha})$,
$\theta_t(y) = \Theta(y t^{-\alpha})$ and $\psi_t(y) = t^\alpha \Psi(y t^{-\alpha})$. Then, for $y \in E_t = (t^\alpha \ell_E, t^\alpha r_E)$,
\[ \theta_t(y) = \int_{0}^y \frac{2}{b_t(z)-a_(z)} dz \hspace{10mm} \psi_t(y) = \int_{0}^y \frac{2y - b_t(z) - a_t(z)}{b_t(z)-a_t(z)} dz , \]
and $\psi_t(y) \leq \psi_t(x) + (y-x) \theta_t(x) + |y-x|$.

Then, for any process $Y$ with marginals $(\mu_t)_{t \in [\epsilon,T]}$,
\begin{eqnarray*}
V^{[\epsilon,T]}(Y)   &\geq & \int_\epsilon^T dt \int dy \psi_t(y)  \dot{\rho}(t,y) \\
               & = &\int_\epsilon^T dt \int dy t^\alpha \Psi(y t^{-\alpha}) t^{-(\alpha+1)}\zeta_Z(y t^{-\alpha}) \\
               & = & \int_\epsilon^T dt \; t^{\alpha -1} \int dz \Psi(z) \zeta_Z(z) \\
               & = &   \frac{(T^\alpha - \epsilon^\alpha)}{\alpha}\int dz \Psi(z) \zeta_Z(z)
\end{eqnarray*}
with equality for the process $X$ constructed in Section~\ref{ssec:construction} using the martingale transports defined via (\ref{eq:pidef}).
Letting $\epsilon \downarrow 0$ we obtain $V^{[0,T]}(Y) \geq \frac{T^\alpha}{\alpha}\int dz \Psi(z) \zeta_Z(z)$.
\begin{rem}{\rm
Note that $\dot{\psi}(t,x) = \alpha t^{\alpha-1} [\Psi(xt^{-\alpha}) - x t^{-\alpha} \Psi'(xt^{-\alpha}) ] \leq \alpha t^{\alpha - 1} J$ where
$J = \sup_y |\Psi(y) - y \Psi'(y)|$. Then from the shape of $\Psi$ (concave, then convex, then concave again) we find $J = \max \{ \Psi(\ell_E) + (\Theta(\ell_E)+1) |\ell_E|;
\Psi(r_E) + (1 - \Theta(r_E)) r_E;  \Psi'(\ell_E)\ell_E - \Psi(\ell_E); \Psi'(r_E)r_E - \Psi(r_E)\} < \infty$. Similarly $\sup_x |\theta_t(x)| = \sup_x |\Theta(x)| \leq 2$.

Hence on each interval of the form $[\epsilon, T]$, with $\epsilon > 0$ (\ref{eq:dual}) holds (and if $\epsilon = 0$ then (\ref{eq:dual0}) holds)
and we have a pathwise inequality to
complement the inequality in expectation.
}\end{rem}

\begin{eg}[Brownian motion]{\rm
For Brownian motion there do not exist explicit forms for $\Theta$ or $\Psi$. Nonetheless we find $V^{[0,T]}(Y) \geq C \sqrt{T}$ where $C$ is a finite constant
which can evaluated numerically:
\[ C = 2 \int \Psi(y) \zeta_Z(y) dy  = 2 \int \Psi(y) (- y \rho_Z(y) )' dy .\]
The inequality is tight; in particular, there exists a fake Brownian motion with finite total variation on $[0,T]$.
Moreover $C$ is bounded above by
\[ C \leq 4 \int_1^\infty \Psi(y) (-y \rho_Z(y))' dy
= 4 \int_1^\infty (z^3 - z) e^{-z^2/2}
\frac{dz}{\sqrt{2 \pi}} = \sqrt{\frac{32}{\pi}}e^{-1/2}. \]
Here we use symmetry about zero, the bounds $0 \leq \Psi(y) \leq |y|$, and the fact that $(-y \rho_Z(y))' =
(y^2-1)e^{-y^2/2} / \sqrt{2 \pi}$
is negative on $|y| \leq 1$.
}
\end{eg}

\begin{eg}
\label{eg:uniform2}{\rm
Consider the continuous uniform example (Example~\ref{eg:uniform}) in which $\mu_t \sim U[-t,t]$.
In this case we can write down values for $\Theta$ and $\Psi$ directly.
Note that in (\ref{eq:uniformdual}) below we have exploited the fact that when $x \notin [-1,1]$ the inequality
$L_t(x,y) = 0$ is only tight at $y=x$ to give a simpler form for $\Theta$ than that given in
(\ref{eq:Thetadef}).

Let $\Psi(x) = x^2 \wedge 1$ and $\Theta(x) = x I_{\{ -1<x<1 \} }$. Then, for all $x,y \in \R$,
\begin{equation}
\label{eq:uniformdual}
 \Psi(y) \leq \Psi(x) + \Theta(x)(y-x) + |y-x|,
 \end{equation}
or equivalently, $|y-x| \geq \Psi(y) - \Psi(x) - \Theta(x)(y-x)$. The inequality is easily
proved by considering the different cases: for example, if $-1<x \leq y < 1$,
\[ \Psi(x) + \Theta(x)(y-x) + |y-x| = x^2 + x(y-x) + (y-x) = y^2 + (y-x)(1-y) \geq y^2 = \Psi(y) \]
and if $-1<x < 1 \leq y$,
\[ \Psi(x) + \Theta(x)(y-x) + |y-x| = x^2 + x(y-x) + (y-x) = 1 + (x+1)(y-1) \geq 1 = \Psi(y) .\]

Now define $\psi(t,x) = t \Psi(x/t)$ and $\theta(t,x) = \Theta(x/t)$. It follows that
\begin{eqnarray*} \psi(t,y) = t \Psi(y/t) & \leq & t \left\{ \Psi(x/t) + \Theta(x/t)\left(\frac{y}{t} - \frac{x}{t} \right) +
\left| \frac{y}{t} - \frac{x}{t} \right| \right\} \\
& = & \psi(t,x) + \theta(t,x) (y-x) + |y-x| .
\end{eqnarray*}

Let $Y= (Y_t)_{0 \leq t \leq T}$ be any martingale with uniform marginals $Y_t \sim U[-t,t]$. Then, disregarding for the moment that the regularity
elements of Assumption~\ref{ass:dispersion} are not satisfied, from (\ref{eq:inequality}) we have
\begin{equation}
V^{[0,T]}(Y) \geq  \lim_{\epsilon \downarrow 0} \int_\epsilon^T dt \int \frac{x^2}{t} q_t(dx)
 =  \int_0^T dt \left[ t \frac{1}{2t} + t \frac{1}{2t} - \int_{-t}^{t} \frac{x^2}{t} \frac{dx}{2 t^2} \right]
 =  \int_0^T \left[ 1 - \frac{1}{3} \right] dt = \frac{2T}{3},
\end{equation}
or alternatively, using the second representation in (\ref{eq:0T}),
\[ V^{[0,T]}(Y) \geq \frac{1}{2 T^2} \int_{-T}^T x^2 dx + \int_0^T dt \int_{-T}^{T} \frac{x^2}{T^2} \frac
{dx}{2T} = \frac{2T}{3}. \]

To give a direct proof of this bound we argue as follows: omitting the superscript $U,n$ on $t^{U,n}_k$,
\begin{eqnarray*}
V^{[0,T]}(Y) \geq
V( P^{[0,T]}_{U,n},Y) & \geq & \sum_{k=1}^{2^n} \E \left[ \frac{Y_{t_k}^2}{t_{k-1}}\wedge t_{k-1} - \frac{Y_{t_{k-1}}^2}{t_{k-1}} \wedge t_{k-1} \right] \\
& = & \sum_{k=1}^{2^n} \left[ \int_{-t_{k}}^{t_k} \frac{dy}{2 t_k} \frac{y^2}{t_{k-1}} \wedge t_{k-1}
-  \int_{-t_{k-1}}^{t_{k-1}} \frac{dy}{2 t_{k-1}} \frac{y^2}{t_{k-1}} \right] \\
& = & \sum_{k=1}^{2^n} \left[ \int_{t_{k-1}}^{t_k} \frac{dy}{t_k} t_{k-1} +  \int_{0}^{t_{k-1}} dy \left\{ \frac{1}{t_k} - \frac{1}{t_{k-1}} \right\}
\frac{y^2}{ t_{k-1}} \right]  \\
&= & \sum_{k=1}^{2^n} \left[ \frac{t_{k-1}(t_k - t_{k-1})}{t_k} - \frac{(t_k - t_{k-1})}{t_k t_{k-1}} \frac{t_{k-1}^2}{3} \right] \\
& = & \sum_{k=1}^{2^n} \frac{2}{3}\frac{t_{k-1}}{t_k} (t_k - t_{k-1}) \stackrel{n \uparrow \infty}{\longrightarrow} \frac{2T}{3}.
\end{eqnarray*}
}
\end{eg}

\end{document}